\shorttitle{On expected durations of birth-death processes} 
\newcommand{\convd}{\stackrel{D}{\longrightarrow}}
\newcommand{\me}{\mathrm{e}}
\newcommand{\mE}{\mathrm{E}}
\newcommand{\mP}{\mathrm{P}}
\newcommand{\mc}{\mathrm{C}}
\newcommand{\bmpi}{\mbox{\boldmath{$\pi$}}}
\newcommand{\bmphi}{\mbox{\boldmath{$\phi$}}}
\numberwithin{equation}{section}
\begin{document}

\title{On expected durations of birth-death processes, with applications to branching
processes and SIS epidemics} 

\authorone[University of Nottingham]{Frank Ball} \authortwo[Stockholm University]{Tom Britton} \authorthree[Lancaster University]{Peter Neal} 

\addressone{The Mathematical Sciences Building,
University Park, Nottingham, NG7 2RD, UK}
\addresstwo{Department of Mathematics, Stockholm University,
SE-106 91 Stockholm, Sweden.}
\addressthree{Department of Mathematics and Statistics,
Fylde College, Lancaster University, LA1 4YF,
United Kingdom} 

\begin{abstract}
We study continuous-time birth-death type processes, where
individuals have independent and identically distributed lifetimes,
according to a random variable $Q$, with $\mE[Q]=1$, and where the
birth rate if the population is currently in state (has size) $n$ is
$\alpha(n)$. We focus on two important examples, namely
$\alpha(n)=\lambda n$ being a branching process, and
$\alpha(n)=\lambda n(N-n)/N$ which corresponds to an SIS
(susceptible $\to$ infective $\to$ susceptible) epidemic model in a
homogeneously mixing community of fixed size $N$. The processes are
assumed to start with a single individual, {\it i.e.}~in state 1.
Let $T$, $A_n$, $C$ and $S$ denote the (random) time to extinction,
the total time spent in state $n$, the total number of individuals
ever alive and the sum of the lifetimes of all individuals in the
birth-death process, respectively. The main results of the paper
give expressions for the expectation of all these quantities, and
shows that these expectations are insensitive to the distribution of
$Q$. We also derive an asymptotic expression for the expected time to
extinction of the SIS epidemic, but now starting at the endemic
state, which is \emph{not} independent of the distribution of $Q$.
The results are also applied to the household SIS epidemic, showing
that its threshold parameter $R_*$ is insensitive to the
distribution of $Q$, contrary to the household SIR (susceptible
$\to$ infective $\to$ recovered) epidemic, for which $R_*$ does
depend on $Q$.
\end{abstract}

\keywords{Birth-death process; branching processes; SIS
epidemics; insensitivity results.} 

\ams{60J80}{60G10;92D30} 

\section{Introduction} \label{S:intro}

A key question for population processes of a birth-death type, for
example, branching processes and epidemic processes (with infection and
recovery corresponding to birth and death, respectively), is what
effect does the lifetime distribution have on key quantities of
scientific interest? For example, consider a single-type branching
process, where individuals have independent and identically
distributed ({\it iid}) lifetimes according to a random variable $Q$ having an arbitrary, but
specified, distribution and, whilst alive, give birth at the
points of a homogeneous Poisson point process with rate $\lambda$.
The basic reproduction number, $R_0 = \lambda \mE [Q]$, the mean number of
offspring produced by an individual during its lifetime, only
depends upon $Q$ through its mean  $\mE[Q]$. The mean total size of a
subcritical branching process $(R_0 < 1)$ with one ancestor is $1/(1-R_0)$, which is again
independent of the distribution of $Q$. However, other quantities of
interest such as the probability of extinction and the Malthusian parameter of the branching process depend upon the distributional form
of $Q$. Thus, in the language of stochastic networks, $R_0$ can be
viewed as an {\it insensitivity} result in that it depends on $Q$
only through its mean, see, for example, \cite{Z07}. Without loss of
generality throughout the paper we assume that $\mE[Q]=1$.

Insensitivity results for stochastic networks are well known, see
for example, \cite{Sev57}, \cite{Whittle85} and \cite{Z07}. In
particular, in \cite{Z07}, Theorem 1, it is shown that for a wide
class of queueing networks, where arrivals (births) into the system
are Poissonian with rate depending upon the total number of
individuals in the system and each arrival has an {\it iid}
workload, the stationary distribution of the total number of
individuals in the system is insensitive to the distribution of $Q$.
It then follows automatically that, for example, the mean duration
of a busy period of the network (at least one individual in the
system) is insensitive to the distribution of $Q$.

Given the similarities between queueing networks and birth-death
type models, arrivals equating to births and workload equating to
lifetime, we seek in this paper to explore insensitivity results for
birth-death type processes with particular emphasis upon branching
processes and SIS (susceptible $\to$ infective $\to$ susceptible) epidemic models. In many cases,  \cite{Z07},
Theorem 1, cannot be applied directly to birth-death processes, as
many birth-death processes do not exhibit stationary behaviour. For
example, a branching process will either go extinct or grow
exponentially. However, we can exploit \cite{Z07}, Theorem 1, for
birth-death type processes whose mean time to extinction is finite
by introducing a regeneration step ({\it cf.}~\cite{HSCC99}) whenever the
population goes extinct. That is, whenever the population goes
extinct, it spends an exponential length of time in state 0 (no
individuals) before a new individual is introduced into the
population (regeneration). The birth-death type process with
regenerations then fits into the framework of \cite{Z07}, provided
that the birth rate is Poissonian and depends upon the population
only through its size.
Insensitivity results are then easy to obtain for the regenerative
process, and also for the original birth-death type process. In
particular, we obtain that the mean duration of the birth-death type
process is insensitive to the distribution of $Q$.

The paper is structured as follows. In Section \ref{S:generic}, we
formally introduce the generic birth-death type process with
arbitrary birth rate $\alpha (n)$, where $n$ denotes population size, and introduce regeneration. We
identify key insensitivity results for birth-death type processes,
namely, that the mean duration, the mean time with $n$ individuals alive $(n=1,2,\ldots)$ and the mean total number of
individuals ever alive in the process are insensitive to the
distribution of $Q$. In Section \ref{S:special}, we focus on three
special cases of the birth-death type process, namely, branching
processes with constant birth rate, and homogeneously mixing and
household SIS epidemic models. In Section \ref{S:BP}, we prove a
conjecture of \cite{N14}, that for a subcritical branching process,
the mean time with $n$ $(n=1,2,\ldots)$ individuals alive is
insensitive to $Q$ and, using \cite{Lambert11}, Lemma 3.1, give a
corresponding insensitivity result for critical and supercritical
branching processes. In Section \ref{S:SIS}, we apply the
insensitivity results to homogeneously mixing SIS epidemics and obtain a
simple approximation for the mean duration of the epidemic starting
from a single infective. Moreover, we show that for a supercritical
epidemic ($R_0>1$), the mean duration of the epidemic starting from the
quasi-endemic equilibrium does depend upon the distribution of $Q$ and we give a simple
asymptotic expression for this quantity. Finally, in Section
\ref{S:house} we exploit the results obtained for the homogeneously
mixing SIS epidemic to show that both the threshold parameter $R_\ast$
and the quasi-endemic equilibrium of the household SIS epidemic are
insensitive to the distribution of $Q$. These are interesting findings, as in
the household SIR (susceptible $\to$ infective $\to$ recovered) epidemic both
$R_\ast$ and the fraction of the population ultimately recovered if the epidemic takes off do depend upon the distribution of $Q$.

\section{Generic model} \label{S:generic}

The generic birth-death type process is defined as follows. The
process is initiated at time $t=0$ with one individual. All
individuals, including the initial individual, have {\it iid} lifetimes according to an arbitrary, but
specified, positive random variable $Q$ with finite mean. At the end
of its lifetime an individual dies and is removed from the
population. New individuals are born and enter the population at the
points of an independent inhomogeneous Poisson point process with
rate $\alpha (n) \geq 0$, where $n$ denotes the total number of
individuals in the population. Without loss of generality, we assume
that $\mE[Q]=1$, since otherwise we can simply rescale time by
dividing $Q$ and multiplying $\alpha (n)$ by $\mE[Q]$. The special
cases of a branching process with individuals giving birth at the
points of a homogeneous Poisson point process with rate $\lambda$
and the homogeneously mixing SIS epidemic (see, for
example,~\cite{Kryscio}) in a population of size $N$ with infection
rate $\lambda$, correspond to $\alpha (n) = n \lambda$ and $\alpha
(n) = n \lambda (N-n)/N$, respectively.

The birth-death type process is similar to the single-class networks
studied in \cite{Z07}, Section 2. In \cite{Z07}, it is assumed that
new individuals enter the system (births) at the points of a Poisson process with
state-dependent rate $\alpha (n)$, where $n$ is the total number of
individuals currently in the system. Individuals have independent
and identically distributed workloads, according to a random
variable $Q$ with $\mE[Q]=1$. While there are $n$ individuals in the
system, the total workload is reduced at rate $\beta (n) \geq 0$,
with $\beta (n) > 0$ if and only if $n >0$. In a biological setting,
where the workload $Q$ associated with an individual is its
lifetime, it only makes sense to take $\beta (n)=n$, so each
individual's remaining lifetime decreases at a constant rate 1. In
queueing terminology this corresponds to an infinite server queue.

In \cite{Z07}, Theorem 1, it is shown that if the proper distribution $\bmpi = (\pi(0),
\pi(1), \ldots)$ satisfies the detailed balance equations
\begin{eqnarray} \label{eq:zachary:1}
\pi (n+1) \beta (n+1) = \pi (n) \alpha (n) \hspace{0.5cm} n=0,1,
\ldots,, \end{eqnarray} and
\begin{eqnarray} \label{eq:zachary:2}
\sum_{n=0}^\infty \pi (n) \alpha (n)<\infty,
\end{eqnarray}
then $\bmpi$ is the stationary distribution of the size of the
system, irrespective of the distribution of $Q$. That is, if $Z_t$
denotes the number of individuals in the system at time $t$, then
for $n=0,1,\ldots$
\begin{eqnarray} \label{eq:zachary:0}
\mP (Z_t = n) \rightarrow \pi (n) \hspace{0.5cm} \mbox{as } t
\rightarrow \infty.
\end{eqnarray}
Note that the total number of individuals in the system is not a
Markov process, unless $Q$ has an exponential distribution.

For many biological systems, \cite{Z07} does not apply since often
$\alpha (0)=0$. That is, the population can go extinct and then
remains extinct forever. This is the case for branching processes and the
homogeneously mixing SIS epidemic model.  Moreover, for a branching
process if $\lambda \leq1$ (subcritical/critical), the branching
process goes extinct with probability 1, whereas for a
supercritical branching process $\lambda >1$, the branching process
either goes extinct or grows unboundedly. In either case, a
stationary distribution for the total number of individuals alive
does not exist. The solution to make \cite{Z07}, Theorem 1, relevant
to birth-death type processes is to follow \cite{HSCC99} and
consider a birth-death type process with regeneration. We introduce
regeneration by setting $\alpha (0)=1$, leaving all other
transition rates unchanged. This corresponds to the process, if it goes
extinct, spending an exponentially distributed time, having mean 1, with no
individual before a new individual enters the population leading to
the process restarting (regeneration).

The key questions are, how to analyse the regenerative process and
what does it tell us about the original birth-death type process?
Firstly, if the regenerative process satisfies the detailed balance equation
\eqref{eq:zachary:1} and is non-explosive (satisfies condition
\eqref{eq:zachary:2}), then it has a stationary
distribution $\bmpi$, which is insensitive to the distribution of
$Q$. Secondly, the behaviour of the process between regenerations
are independent and identically distributed copies of the original
birth-death process. Therefore this gives us a way to explore
characteristics of the original process, and of identifying quantities
which are insensitive to the distribution of $Q$ and also, later in
Section \ref{S:SIS}, some which do depend upon the distribution of
$Q$. Recall that in the processes we study $\beta(n)=n$.  Then~\eqref{eq:zachary:1} implies that
\begin{eqnarray} \label{eq:stat:1}
 \pi (n) \alpha (n) &=& (n+1) \pi (n+1)
\hspace{0.5cm} \mbox{for } \; n=0,1,\ldots. \end{eqnarray} Therefore,
it follows that
\begin{eqnarray} \label{eq:stat:3} \pi (n) &=& \pi (0)
\prod_{i=1}^{n-1} \frac{\alpha (i)}{i+1},\hspace{0.5cm} n=1,2,
\ldots,
\end{eqnarray}
where the product is $1$ when $n=1$.
It then follows from \eqref{eq:stat:1} and \eqref{eq:stat:3} that
\begin{eqnarray} \label{eq:stat:4} \pi (0) &=& \left\{ 1 +
\sum_{n=1}^\infty \prod_{i=0}^{n-1} \frac{\alpha (i)}{i+1}
\right\}^{-1}.
\end{eqnarray}
Note that $\bmpi$ being a proper distribution implicitly implies that $\pi(0)>0$ or, equivalently, that the sum in~\eqref{eq:stat:4} is finite, and hence that the process is positive recurrent.
Thus the regenerative
process is not suitable for critical or supercritical branching
processes. We discuss this in more detail in Section
\ref{S:BP} below.

We complete this section by identifying a number of key quantities
whose means are insensitive to the distribution of $Q$ and are
summarised in Theorem \ref{thm1}.
\begin{thm} \label{thm1}
Let $Y_t$ denote the total number of individuals in the birth-death
process at time $t$. Let $T = \int_0^\infty 1_{\{Y_t
> 0\}} \, dt$ denote the duration of the birth-death process and,
for $n=1,2,\ldots$, let $A_n= \int_0^\infty 1_{\{Y_t =n\}} \, dt$
denote the total time the birth-death process spends with $n$
individuals alive. Then
\begin{eqnarray} \label{eq:stat:5} \mE[T] =
\sum_{n=1}^\infty \prod_{i=0}^{n-1} \frac{\alpha (i)}{i+1} ,
\end{eqnarray}
and for $n=1,2,\ldots$,
\begin{eqnarray} \label{eq:stat:6} \mE [A_n] &=&
\prod_{i=0}^{n-1} \frac{\alpha (i)}{i+1}.
\end{eqnarray}
Finally, let $C$ and $S$ denote the total number of individuals ever
alive in the population and the sum of the lifetimes of all
individuals ever alive in the birth-death process, respectively.
Then $\mE[C]= \mE[S]$ with
\begin{eqnarray} \label{eq:stat:9}
\mE[S] &=& \sum_{k=1}^\infty k\prod_{i=1}^{k-1} \frac{\alpha
(i)}{i+1}.
\end{eqnarray}
\end{thm}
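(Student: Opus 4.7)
The strategy is to exploit the regenerative structure introduced just before Theorem \ref{thm1}: by setting $\alpha(0)=1$, the regenerative process becomes a recurrent process whose excursions away from state $0$ are iid copies of the original birth-death process, separated by independent Exp$(1)$ sojourns at $0$. Zachary's Theorem 1 (with $\beta(n)=n$) gives the stationary distribution $\bmpi$ in~\eqref{eq:stat:3}-\eqref{eq:stat:4}, and this distribution is insensitive to $Q$. Transferring this insensitivity to $\mE[T]$, $\mE[A_n]$, $\mE[C]$ and $\mE[S]$ is the substance of Theorem \ref{thm1}.

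First I would check that~\eqref{eq:stat:3}-\eqref{eq:stat:4} indeed apply to the regenerative process (the algebra is already in the text), and note that properness of $\bmpi$ forces $\mE[T]<\infty$, hence positive recurrence. Next, by the strong Markov property at successive returns to $0$ and the memorylessness of the Exp$(1)$ waiting time there, the regenerative process is a renewal process with mean cycle length $1+\mE[T]$ and expected time in state $n\geq1$ per cycle equal to $\mE[A_n]$. The renewal-reward theorem (equivalently~\eqref{eq:zachary:0}) yields $\pi(0)=1/(1+\mE[T])$ and $\pi(n)=\mE[A_n]/(1+\mE[T])$. Rearranging and comparing with~\eqref{eq:stat:4} gives~\eqref{eq:stat:5}, and then~\eqref{eq:stat:6} follows from~\eqref{eq:stat:3} after dividing by $\pi(0)$.

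For~\eqref{eq:stat:9}, I would obtain $\mE[S]$ directly from the identity $S=\int_0^T Y_t\,\mathrm{d}t$ (each alive individual contributes to $S$ at rate $1$, so integrating the population size over the duration recovers the sum of lifetimes), which gives $\mE[S]=\sum_{n=1}^{\infty}n\,\mE[A_n]$; substituting~\eqref{eq:stat:6} is precisely~\eqref{eq:stat:9}. To show $\mE[C]=\mE[S]$, I would use the Poisson compensator to write $\mE[C]=1+\mE\!\left[\int_0^T\alpha(Y_t)\,\mathrm{d}t\right]=1+\sum_{n=1}^{\infty}\alpha(n)\,\mE[A_n]$. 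The detailed balance relation~\eqref{eq:stat:1}, translated through $\mE[A_n]=\pi(n)/\pi(0)$, becomes $\alpha(n)\,\mE[A_n]=(n+1)\,\mE[A_{n+1}]$; reindexing and using $\mE[A_1]=1$ (immediate from~\eqref{eq:stat:6}) telescopes the sum into $\sum_{k=1}^{\infty}k\,\mE[A_k]=\mE[S]$.

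The one conceptual obstacle — the remainder being algebra — is justifying the transfer from Zachary's stationary insensitivity to the cycle-expectation statements~\eqref{eq:stat:5}-\eqref{eq:stat:6}. This rests on identifying the excursions of the regenerative process with iid copies of the original birth-death process (the strong Markov property at the hitting times of $0$, together with the exponential holding at $0$) and on the finiteness of $\mE[T]$ granted by properness of $\bmpi$. Once these are in hand, insensitivity propagates from $\pi(n)$ to $\mE[A_n]$ and $\mE[T]$, and then to $\mE[C]$ and $\mE[S]$ via the algebraic identities above.
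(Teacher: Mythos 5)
Your proposal is correct and follows essentially the same route as the paper: the regenerative/renewal-reward identification $\pi(0)=1/(1+\mE[T])$, $\pi(n)=\mE[A_n]/(1+\mE[T])$ for~\eqref{eq:stat:5}--\eqref{eq:stat:6}, Fubini on $S=\int_0^\infty Y_t\,dt$ for~\eqref{eq:stat:9}, and the births-per-unit-time-in-state-$k$ argument combined with detailed balance for $\mE[C]=\mE[S]$. The only cosmetic difference is that you phrase the regeneration step via the strong Markov property (the process is not Markov for general $Q$, but regeneration at the empty state still holds since no residual lifetimes remain), where the paper simply cites \cite{BM04} for the formal justification.
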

\begin{proof}
An immediate consequence of the above construction is that the mean
time between regenerations is $1/\pi(0)$, irrespective of the
distribution of $Q$. On average one unit of time is spent with no
individual in the population, so (see~\cite{BM04} for a formal
justification)
\begin{eqnarray} \label{eq:stat:5a} \mE[T] =\frac{1}{\pi (0)} -1 = \left\{
\sum_{n=1}^\infty \prod_{i=0}^{n-1} \frac{\alpha (i)}{i+1} \right\},
\end{eqnarray} as required.
Moreover, for $n=1,2,\ldots$,
\begin{eqnarray} \label{eq:stat:6a} \mE [A_n] &=& \pi (n) \frac{1}{\pi(0)}
=\prod_{i=0}^{n-1} \frac{\alpha (i)}{i+1},
\end{eqnarray}
using~\eqref{eq:stat:3}.

Using Fubini's theorem,
\begin{eqnarray} \label{eq:stat:7}
\mE [S]&=& \mE \left[ \int_0^\infty Y_t \, dt \right] \nonumber \\
&=& \mE \left[ \int_0^\infty \sum_{k=1}^\infty k 1_{\{Y_t = k \}} \, dt \right] \nonumber \\
&=& \sum_{k=1}^\infty k \mE \left[\int_0^\infty 1_{\{Y_t = k \}} \, dt \right] \nonumber \\
&=& \sum_{k=1}^\infty k \mE[A_k] = \sum_{k=1}^\infty
k\prod_{i=1}^{k-1} \frac{\alpha (i)}{i+1}.
\end{eqnarray}
Note that, after using~\eqref{eq:zachary:1} with $\beta(n)=n$,~\eqref{eq:zachary:2} ensures that $\mE[S]$ is finite.

Given $A_k$, the mean number of births whilst the process is in
state $k$ is $ \alpha (k) A_k$. Therefore, including the initial
ancestor and noting from~\eqref{eq:stat:6a} that $\mE[A_1] = 1$, we have that
\begin{eqnarray}\label{eq:stat:8}
\mE[C] &=& \mE \left[ 1 + \sum_{k=1}^\infty \alpha (k) A_k \right] \nonumber \\
&=& 1+ \sum_{k=1}^\infty \alpha (k) \frac{ \pi (k)}{ \pi(0)} \nonumber \\
&=& \mE[A_1]+ \sum_{k=1}^\infty (k+1) \frac{ \pi (k+1)}{ \pi(0)} \nonumber \\
&=& \mE[A_1]+ \sum_{k=2}^\infty k \mE[A_k]\label{eq:stat8a}\\
&=& \mE[S].
\end{eqnarray}
\end{proof}

\section{Special cases} \label{S:special}

In this section we apply the results obtained in Section
\ref{S:generic} to three special cases, namely branching processes,
homogeneously mixing SIS epidemics and household SIS epidemics,
yielding fresh insight into these models.

\subsection{Branching process} \label{S:BP}

As mentioned above, we consider branching processes where
individuals have {\it iid} lifetimes  according to $Q$ (with
$E[Q]=1$) and whilst alive give  birth at the points of a
homogeneous Poisson point process with rate $\lambda$. Therefore we
have that $\alpha (n) = n \lambda$. The key result is Lemma
\ref{lem:BP} which is a generalization of \cite{N14}, Conjecture
2.1.
\begin{lem} \label{lem:BP}
For $n=1, 2,\dots$,
\begin{equation}
\mE[A_n] =\frac{\lambda^ {n-1}}{n(\max \{1, \lambda
\})^n}.\label{main-result}
\end{equation}
\end{lem}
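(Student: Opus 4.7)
The plan is to split the argument by the regime of $\lambda$, since Theorem~\ref{thm1} only applies when the auxiliary regenerative chain of Section~\ref{S:generic} is positive recurrent, and this holds only in the strictly subcritical regime $\lambda<1$.

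For the subcritical case $\lambda<1$, I would apply Theorem~\ref{thm1} directly with $\alpha(0)=1$ (the regenerative convention) and $\alpha(i)=i\lambda$ for $i\geq 1$. Positive recurrence of the regenerative chain follows because, by~\eqref{eq:stat:3}, $\pi(n)\propto \lambda^{n-1}/n$ for $n\geq 1$ and $\sum_{n\geq 1}\lambda^{n-1}/n=-\lambda^{-1}\log(1-\lambda)<\infty$, so~\eqref{eq:zachary:2} holds. Theorem~\ref{thm1}, through~\eqref{eq:stat:6}, then gives
\[
\mE[A_n]\;=\;\prod_{i=0}^{n-1}\frac{\alpha(i)}{i+1}\;=\;\frac{\lambda^{n-1}}{n},
\]
which matches~\eqref{main-result} since $\max\{1,\lambda\}=1$ in this regime.

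For $\lambda\geq 1$ the series above diverges, so the regenerative chain is no longer positive recurrent and Theorem~\ref{thm1} is unavailable. The plan here is to invoke~\cite{Lambert11}, Lemma~3.1, which provides expected occupation times for continuous-time branching processes with i.i.d.\ lifetimes of arbitrary distribution and Poisson births. Applied with birth rate $\lambda$ and $\mE[Q]=1$, this should yield $\mE[A_n]=1/n$ at $\lambda=1$ and $\mE[A_n]=1/(n\lambda)=\lambda^{n-1}/(n\lambda^n)$ for $\lambda>1$, both consistent with~\eqref{main-result}.

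The main obstacle is precisely the supercritical case: on the event of non-extinction the process still visits state $n$ only finitely often, and those visits contribute nontrivially to $\mE[A_n]$ in a way that the regenerative framework of Theorem~\ref{thm1} cannot detect, because no proper stationary law exists. A natural strategy would be to condition on extinction via Doob's $h$-transform with $h(k)=\rho^k$ (where $\rho$ is the extinction probability), obtaining a subcritical process to which the argument above applies, and then account separately for the non-extinction excursions. The role of~\cite{Lambert11}, Lemma~3.1, is to carry such a reduction through for an arbitrary lifetime distribution $Q$ while preserving the insensitivity in $Q$ that makes~\eqref{main-result} depend only on $\lambda$.
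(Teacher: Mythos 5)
Your subcritical half is exactly the paper's argument: with $\alpha(i)=i\lambda$ and the regenerative convention $\alpha(0)=1$, \eqref{eq:stat:6} gives $\mE[A_n]=\lambda^{n-1}/n$, and the convergence of $\sum_n\lambda^{n-1}/n$ is precisely what makes Theorem~\ref{thm1} applicable. You have also correctly identified \cite{Lambert11}, Lemma 3.1, as the tool for $\lambda\ge 1$, which is the paper's route. However, for $\lambda\ge1$ your proposal contains a genuine gap: you assert that the lemma ``should yield'' $\mE[A_n]=1/(n\lambda)$ but never perform the step that constitutes the entire proof. Lambert's lemma does not hand you occupation times; it gives the fixed-time marginal
$\mP(Y_t=n)=\bigl(1-1/W(t)\bigr)^{n-1}W'(t)/\bigl(\lambda W(t)^2\bigr)$,
and the proof is to integrate this over $t\in[0,\infty)$, recognising the integrand as an exact derivative so that
\begin{equation*}
\mE[A_n]=\int_0^\infty \mP(Y_t=n)\,dt=\frac{1}{\lambda}\left[\frac{1}{n}\Bigl(1-\frac{1}{W(t)}\Bigr)^n\right]_0^\infty=\frac{1}{\lambda n},
\end{equation*}
using only $W(0)=1$ and $W(\infty)=\infty$ when $\lambda\ge1$. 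That calculation is the content of the lemma in this regime and is absent from your write-up.

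Your final paragraph compounds the problem. The ``main obstacle'' you describe --- that visits to state $n$ on the non-extinction event are invisible to the regenerative framework --- is real for Theorem~\ref{thm1}, but it simply does not arise once you have the time-marginal formula, since $\int_0^\infty\mP(Y_t=n)\,dt$ already aggregates the contributions of both the extinction and non-extinction events. The Doob $h$-transform detour you sketch is therefore unnecessary, and as stated it would not close the gap: for general $Q$ the population size is not Markov, so $h(k)=\rho^k$ cannot be applied to the size process directly; conditioning the branching process on extinction exponentially tilts the lifetime distribution (so $\mE[Q]=1$ is lost and the subcritical formula does not apply verbatim); and you would still be left with the term $\mE[A_n 1_{\{\mathrm{non\mbox{-}ext}\}}]$, which you explicitly defer and which is the hard part of that route. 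I recommend deleting that paragraph and replacing it with the one-line integration above.
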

The Lemma is proved in \eqref{eq:bp:3} and \eqref{eq:bp:6} below.

For $\lambda <1$, the branching process is subcritical and the
results of Section \ref{S:generic} hold. The form of $\alpha (n)$
allows explicit expressions to be obtained. Firstly, it follows from
\eqref{eq:stat:3} that
\begin{eqnarray} \label{eq:bp:1} \pi (n) &=&
\pi (0) \prod_{i=1}^{n-1} \frac{i \lambda}{i+1}
= \pi (0) \frac{\lambda^{n-1}}{n},\hspace{0.5cm} n=1,2,
\ldots.
\end{eqnarray}
It then follows immediately that
\begin{eqnarray}  \pi (0) &=& \left\{ 1 +
\sum_{n=1}^\infty \frac{\lambda^{n-1}}{n} \right\}^{-1} \label{eq:bp:2a}
\\
&=&  \left\{ 1 -\log(1-\lambda)/\lambda  \right\}^{-1}.\label{eq:bp:2}
\end{eqnarray}
(Note that the sum in~\eqref{eq:bp:2a} diverges if $\lambda \ge 1$.)
Therefore, it follows from \eqref{eq:bp:2} and \eqref{eq:stat:5} that
the mean duration of the branching process is
\begin{equation}
\label{meansubcritext}
\mE[T] =-\log (1-\lambda)/\lambda,
\end{equation}
a result which is well known for the linear
birth-death process with $Q \sim {\rm Exp} (1)$. Also \cite{N14},
Conjecture 2.1, is proved in that, for $n=1,2,\ldots$,
\begin{eqnarray} \label{eq:bp:3} \mE[A_n] &=& \frac{\lambda^{n-1}}{n}.
\end{eqnarray}
Finally, we obtain the classical result that the mean total number
of individuals ever alive in the branching process is
\begin{eqnarray}  \label{eq:bp:4} \mE[C]&=& \sum_{k=1}^\infty k \mE[A_k]= \sum_{k=1}^\infty \lambda^{k-1} = \frac{1}{1-\lambda}.
\end{eqnarray}

The approach taken in Section \ref{S:generic} is valid only in the
above scenario. A key component of the proofs is that the birth rate
depends only upon the total number of individuals in the population.
Therefore the above insensitivity results do not hold for more
general reproductive life histories. However, progress can be made
in extending \cite{N14}, Conjecture 2.1, and \eqref{eq:bp:3} to the
critical and supercritical cases ($\lambda \geq 1$) by using
\cite{Lambert11}, Lemma 3.1. (Note that the mean duration and mean
total number of individuals ever alive in the branching process is infinite  in these cases.) Specifically, \cite{Lambert11}, Lemma
3.1, shows that, for $n=1,2,\ldots$ and $t \geq 0$,
\begin{eqnarray} \label{eq:bp:5} \mP (Y_t = n) = \left(1 -
\frac{1}{W(t)} \right)^{n-1} \frac{W^\prime (t)}{\lambda W (t)^2},
\end{eqnarray} where $W(t)$ solves \cite{Lambert11}, equation (6). For most
choices of $Q$, it is not possible to get an explicit expression for
$W(t)$ for all $t \geq 0$. However,  for any $Q$ ($\mE[Q]=1$) $W(0)=1$ and for $\lambda \geq 1$,
$W(\infty)=\infty$. Therefore
\begin{eqnarray} \label{eq:bp:6} \mE [A_n] &=& \int_0^\infty \mP (Y_t =
n) \, dt \nonumber \\
&=& \frac{1}{\lambda} \int_0^\infty \left(1 - \frac{1}{W(t)}
\right)^{n-1} \frac{W^\prime (t)}{W (t)^2} \, dt \nonumber  \\
&=& \frac{1}{\lambda} \left[ \frac{1}{n} \left( 1- \frac{1}{W(t)}
\right)^n \right]_0^\infty = \frac{1}{\lambda n}. \end{eqnarray} It
should be noted that in the subcritical case ($\lambda <1$)
$W(\infty)=1/(1-\lambda)$ and \eqref{eq:bp:6} can be used to obtain
\eqref{eq:bp:3} directly.

\subsection{SIS epidemic} \label{S:SIS}

\subsubsection{Mean duration with one initial infective} \label{S:SIS1}

For a homogeneously mixing SIS epidemic in a population of size $N$,
the rate at which new infections (births) occur, given that there
are currently $n$ infectives is $\alpha (n) = \lambda n (N-n)/N$,
where $\lambda$ is the rate at which a typical infective makes
infectious contacts. Infectious contacts are assumed to be with
individuals chosen independently and uniformly  at random from the
entire population.  Thus, if there are $y$ infectives, and hence
$N-y$ susceptibles, the probability that an infectious contact is
with a suscpetible, and hence results in a new infective, is
$(N-y)/N$.  The infectious periods of infectives are independently
and identically distributed according to $Q$ and an infective
becomes susceptible again as soon as its infectious period ends.
Using \eqref{eq:stat:3}, it is straightforward to show that, for
$n=1,2,\ldots, N$,
\begin{eqnarray} \label{eq:SIS:1} \pi^{(N)} (n) = \frac{\pi^{(N)} (0)}{n} \frac{(N-1)!}{(N-n)!} \left(
\frac{\lambda}{N}\right)^{n-1},
\end{eqnarray} which has previously been obtained for the Markov
case ($Q \sim {\rm Exp} (1)$) by~\cite{HSCC99}.
Consequently, the mean duration of the epidemic starting from a single infective is
\begin{eqnarray} \label{eq:SIS:2} \mE\left[ T^{(N)}\right] &=& \sum_{n=1}^N
\frac{(N-1)!}{n (N-n)!} \left( \frac{\lambda}{N} \right)^{n-1}.
\end{eqnarray} Note that this follows directly from~\eqref{eq:stat:5}, on recalling that there $\alpha(0)=1$.  We index statistics of interest by the total population size $N$ to highlight the role played by $N$ in the analysis below.

It is interesting to investigate the behaviour of $\mE\left[ T^{(N)}\right]$ for
large $N$. We summarise the results in Lemma \ref{lem:sis}.
\begin{lem} \label{lem:sis}
For the subcritical case,  $\lambda <1$,
\begin{eqnarray} \label{eq:ls1} \mE\left[T^{(N)}\right]\rightarrow - \frac{\log (1-\lambda)}{\lambda}
\hspace{0.5cm} \mbox{as } N \rightarrow \infty. \end{eqnarray}

For the critical case, $\lambda =1$,
\begin{eqnarray} \label{eq:ls2} \mE\left[T^{(N)}\right] \sim \frac{1}{2} \log N,
\end{eqnarray} where $a^{(N)} \sim b^{(N)}$ denotes that $\lim_{N
\rightarrow \infty} a^{(N)}/b^{(N)} =1$.

For the supercritical case, $\lambda >1$,
\begin{eqnarray} \label{eq:ls3} \mE\left[T^{(N)}\right] \sim \frac{\sqrt{2 \pi}}{\lambda -1} \frac{\exp (\{ \log \lambda -1 +1/\lambda \}
N)}{\sqrt{N}}.
\end{eqnarray}
\end{lem}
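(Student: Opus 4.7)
The plan is to analyze $\mE[T^{(N)}] = \sum_{n=1}^N a_n^{(N)}$, where
\[
a_n^{(N)} := \frac{(N-1)!}{n(N-n)!}\left(\frac{\lambda}{N}\right)^{n-1} = \frac{\lambda^{n-1}}{n}\prod_{i=1}^{n-1}\left(1 - \frac{i}{N}\right).
\]
The three regimes require rather different techniques. For the subcritical case ($\lambda<1$), I would apply dominated convergence: since each factor $(1-i/N)\le 1$, we have the uniform bound $a_n^{(N)} \le \lambda^{n-1}/n$, which is summable with sum $-\log(1-\lambda)/\lambda$, and pointwise $a_n^{(N)} \to \lambda^{n-1}/n$ as $\Nr$. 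This yields \eqref{eq:ls1} immediately.

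For the critical case ($\lambda=1$), I would use the expansion $\log(1-i/N) = -i/N + O(i^2/N^2)$ to show that, uniformly for $n = o(N^{2/3})$,
\[
a_n^{(N)} = \frac{1}{n}\exp\!\left(-\frac{n^2}{2N}\right)(1 + o(1)),
\]
while the tail for $n \gtrsim \sqrt{N}\log N$ is super-polynomially small. Approximating the main sum by $\int_1^N \frac{1}{x}\exp(-x^2/(2N))\,dx$ and substituting $u = x^2/(2N)$ reduces it to $\frac{1}{2}\int_{1/(2N)}^{N/2}\frac{e^{-u}}{u}\,du$, whose dominant contribution comes from $u$ near the lower limit of order $1/N$ and gives $\frac{1}{2}\log N(1+o(1))$, yielding \eqref{eq:ls2}.

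The supercritical case ($\lambda>1$) calls for Laplace's method on the discrete sum. Using Stirling's formula one can show that, for $n = xN$ with $x\in(0,1)$ bounded away from the endpoints,
\[
\log a_n^{(N)} = N\phi(x) - \log N + C(x) + O(1/N),
\]
where $\phi(x) = x\log\lambda - x - (1-x)\log(1-x)$ and $C(x) = -\log x - \log\lambda - \frac{1}{2}\log(1-x)$. A short computation gives $\phi'(x) = \log(\lambda(1-x))$ and $\phi''(x) = -1/(1-x)$, so the unique maximiser of $\phi$ on $(0,1)$ is $x^* = 1-1/\lambda$, with $\phi(x^*) = \log\lambda - 1 + 1/\lambda$ and $\phi''(x^*) = -\lambda$. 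Laplace's method then reduces the sum to a Gaussian centred at $n^* = x^* N$ of width $\sqrt{N/\lambda}$; combining this with the evaluation $e^{C(x^*)} = \sqrt{\lambda}/(\lambda-1)$ produces the precise pre-exponential constant $\sqrt{2\pi}/(\lambda-1)$ in \eqref{eq:ls3}.

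The main obstacle is the supercritical case. Although Laplace's method is standard, pinning down the exact pre-exponential constant requires simultaneously tracking the Gaussian width coming from $\phi''(x^*)$ and the sub-leading factor $C(x^*)$, so the $O(1)$ corrections in Stirling's formula must be carried through carefully. Tail estimates showing that contributions from $n$ outside a $\sqrt{N}\log N$-window around $n^*$ are negligible relative to $e^{N\phi(x^*)}/\sqrt{N}$ also need to be verified, but they follow routinely from the strict concavity of $\phi$ on $(0,1)$. The critical case is conceptually easier but also demands some care, because the main contributions come from a regime $n = O(\sqrt{N})$ that interpolates between the constant-$n$ and $n$-of-order-$N$ asymptotics.
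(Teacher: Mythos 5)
Your proposal is correct in all three regimes, and the values it produces check out (in particular $\phi(x^*)=\log\lambda-1+1/\lambda$, $\phi''(x^*)=-\lambda$ and $\me^{C(x^*)}=\sqrt{\lambda}/(\lambda-1)$ do combine, via the Gaussian sum $\sqrt{2\pi N/\lambda}$, to give the stated prefactor $\sqrt{2\pi}/(\lambda-1)$). For $\lambda<1$ and $\lambda=1$ you are doing essentially what the paper does: the paper hand-rolls your dominated-convergence argument as a $\liminf$/$\limsup$ sandwich with truncation at a fixed $k$, and in the critical case it likewise isolates the $n=O(\sqrt{N})$ window, using the bound $\prod_{i=1}^{n-1}(1-i/N)\le\exp(-n(n-1)/2N)$ for the upper bound and the birthday-problem limit $\prod_{i=1}^{\lceil K\sqrt{N}\rceil}(1-i/N)\to\exp(-K^2/2)$ for the lower bound, then lets the cutoffs tend to their extremes; your integral-substitution route to $\tfrac12\log N$ is an equally valid execution of the same idea. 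The genuine divergence is in the supercritical case: the paper avoids a full Laplace analysis by rearranging the sum as $(N-1)!(\lambda/N)^{N-1}\me^{N/\lambda}\sum_j (N/\lambda)^j\me^{-N/\lambda}/((N-j)j!)$, recognising Poisson$(N/\lambda)$ probabilities, and using Chebyshev plus a large-deviation bound to show $N\sum_j(N/\lambda)^j\me^{-N/\lambda}/((N-j)j!)\to\lambda/(\lambda-1)$; the Poisson normalisation then absorbs the Gaussian-width computation entirely, and only one application of Stirling (to $(N-1)!$) is needed for the prefactor. Your direct Laplace method localises at the same point $n^*=(1-1/\lambda)N$ but must carry the $\tfrac12\log$ corrections in Stirling and the curvature $\phi''(x^*)$ explicitly; it is more work to make rigorous (uniformity of the expansion near $x^*$ plus the concavity-based tail bounds you mention), but it is more self-contained and generalises more readily to birth rates $\alpha(n)$ without a convenient probabilistic reinterpretation. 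Both are sound; the paper's probabilistic repackaging simply buys a shorter rigorous argument for this particular $\alpha(n)$.
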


We give a heuristic proof of Lemma~\ref{lem:sis} here, deferring a formal proof to Appendix A.

For $\lambda <1$, we have that $\pi^{(N)} (n) \approx \pi^{(N)} (1)
\lambda^{n-1}/n$, a branching process approximation ({\it
c.f.}~\cite{Whittle}, \cite{BallDonnelly}) and $\mE\left[ T^{(N)}\right] \approx
-\log (1- \lambda)/\lambda$ ({\it c.f.}~Section \ref{S:BP}).

For $\lambda=1$, it is fruitful to recall the birthday problem.  Balls are drawn uniformly at random with replacement from an urn containing $N$ balls numbered $1,2,\dots,N$.  Let $M^{(N)}$ be the number of draws required until a ball is drawn that had previously been drawn.  Then~\eqref{eq:SIS:2} yields, with products being one if vacuous,
\begin{eqnarray}
\mE\left[T^{(N)}\right]&=&\sum_{n=1}^N \frac{1}{n} \prod_{i=1}^{n-1} \left(1 - \frac{i}{N} \right)\nonumber\\
&=&\sum_{n=1}^N \frac{1}{n} \mP\left(M^{(N)}>n\right)\nonumber\\
&=&\mE\left[\sum_{n=1}^{M^{(N)}} \frac{1}{n}\right].
\end{eqnarray}
Now, for example,~\cite{Aldous85}, page 96,
\begin{equation}
\label{birthday}
N^{-\frac{1}{2}} M^{(N)} \overset{\mathrm{D}}{\longrightarrow} M \quad\mbox{as}\quad N \to \infty,
\end{equation}
where $\overset{\mathrm{D}}{\longrightarrow}$ denotes convergence in distribution and $M$ is a random variable having probability density function $f_M(x)=x\exp\left(-\frac{x^2}{2}\right)$ $(x > 0)$.  Thus, when $N$ is large, $M^{(N)}$ is concentrated on $n$-values of the form $x N^{\frac{1}{2}}$, for which
\[
\sum_{i=1}^n \frac{1}{i} \approx \log\left( x N^{\frac{1}{2}}\right)=\log x + \frac{1}{2} \log N
\]
and~\eqref{eq:ls2} follows.

For $\lambda >1$, rearranging \eqref{eq:SIS:2} we get
\begin{equation} \label{eq:SIS:3}
\mE\left[ T^{(N)}\right]=(N-1)!(\lambda/N)^{N-1}\sum_{j=0}^{N-1}
\frac{(N/\lambda)^{j}}{(N-j)j!}.
\end{equation}
Let us first consider the sum in~\eqref{eq:SIS:3}. If we multiply by
$\me^{-N/\lambda}$ we are adding Poisson($N/\lambda$) probabilities
multiplied by $1/(N-j)$. When $N$ is large the probability mass is
concentrated on $j$-values near the mean, $j\approx N/\lambda$, by
the law of large numbers. Owing to the assumption $\lambda>1$, these
terms are smaller than $N$ and hence contained in the finite sum.
From this it follows that the sum is asymptotically equivalent to
\begin{eqnarray} \label{eq:dur:1}
\sum_{j=0}^{N-1} \frac{(N/\lambda)^{j}}{(N-j)j!} &=&
\me^{N/\lambda}\sum_{j=0}^{N-1}
\frac{(N/\lambda)^{j}\me^{-N/\lambda}}{(N-j)j!} \nonumber \\  &\approx&
\frac{\me^{N/\lambda}}{N(1-1/\lambda)} \sum_{j=0}^{N-1}
\frac{\me^{-N/\lambda}(N/\lambda)^{j}}{j!} \approx
\frac{\me^{N/\lambda}}{N(1-1/\lambda)} .
\end{eqnarray}

The second approximation we use is Stirling's formula implying that
$(N-1)!\sim \sqrt{2\pi (N-1)}(N-1)^{(N-1)}\me^{-(N-1)}$. Combining
these two approximations yields
\begin{eqnarray} \label{eq:SISdur}
\mE\left[ T^{(N)}\right] &\sim &\left(\frac{\lambda}{N}\right)^{N-1}\frac{
\sqrt{2\pi (N-1)}(N-1)^{(N-1)}}{\me^{(N-1)}N(1-1/\lambda)
}e^{N/\lambda} \nonumber \\ &=& \frac{ \sqrt{2\pi(N-1)}}{(\lambda
-1)N}\left(\frac{N-1}{N}\right)^{N-1}\frac{\lambda^N e^{N/\lambda}}{
\me^{N-1}}\nonumber\\
&\sim& \frac{\sqrt{2\pi}}{\lambda -1}\frac{\exp(\{\log\lambda -
1+1/\lambda\}N)}{\sqrt{N} },
\end{eqnarray}
as required.

Let $ A_n^{(N)}$ denote the total amount of time that the SIS
epidemic, initiated with a single infective, spends with $n$
infectious individuals. Then, from~\eqref{eq:stat:6a} and~\eqref{eq:stat:5a},
\begin{equation}
\label{eq:sisAn}
\mE\left[A_n^{(N)}\right]=\frac{\pi^{(N)} (n)}{\pi^{(N)} (0)}=\frac{\pi^{(N)} (n)}{1-\pi^{(N)} (0)}\mE\left[T^{(N)}\right].
\end{equation}
Using~\eqref{eq:stat:8}, the first equation in~\eqref{eq:sisAn}
and~\eqref{eq:SIS:1}, the mean total number of infectives during the
course of a supercritical epidemic is
\begin{eqnarray}
\mE\left[C^{(N)}\right] = \sum_{n=1}^N n \mE\left[A_n^{(N)}\right] &=& \sum_{n=1}^N n \frac{\pi^{(N)} (n)}{\pi^{(N)} (0)}
\nonumber \\ &=& \sum_{n=1}^N \frac{(N-1)!}{(N-n)!} \left(
\frac{\lambda}{N}
\right)^{n-1} \label{eq:SIS:f0.5} \\
& \sim & \frac{\sqrt{2 \pi}}{\lambda} \sqrt{N} \exp \left( \{\log
\lambda - 1 + 1/\lambda \} N \right).\label{eq:SIS:f1}
\end{eqnarray}
The derivation of \eqref{eq:SIS:f1} is similar to but simpler than
that of $\mE\left[ T^{(N)}\right]$.

Note that the second equation in~\eqref{eq:sisAn} gives
\begin{equation*}
\mE\left[C^{(N)}\right]=\left(\sum_{n=1}^N \frac{n \pi^{(N)} (n)}{1-\pi^{(N)} (0)}\right)\mE\left[T^{(N)}\right].
\end{equation*}
The distribution $\tilde{\bmpi}^{(N)}=(\tilde{\pi}_1^{(N)},
\tilde{\pi}_2^{(N)},\ldots, \tilde{\pi}_N^{(N)})$,  where
$\tilde{\pi}_n^{(N)}=\pi^{(N)}(n)/(1-\pi^{(N)}(0))$, gives a
``quasi-equilibrium" distribution for the SIS epidemic.  Thus, the
mean total number of infectives in the epidemic is given by the mean
number of infectives in quasi-equilibrium multiplied by the mean
duration of the epidemic.  When the epidemic is supercritical
($\lambda>1$), the distribution of $\tilde{\bmpi}^{(N)}$ is concentrated
on values close to $N(1-\lambda^{-1})$, as indicated previously,
which explains the simple multiplicative relationship between the
approximations~\eqref{eq:SISdur} and~\eqref{eq:SIS:f1}.

\subsubsection{Mean extinction time from quasi-endemic equilibrium} \label{S:SIS2}

The above calculations of $\mE\left[ T^{(N)}\right]$ are insensitive to the
distribution of $Q$. However, for supercritical SIS epidemics there
is interest in the time to extinction of the epidemic starting from
the quasi-endemic equilibrium of $N (1-1/\lambda)$ infectives. We
outline how the mean time to extinction from the quasi-endemic
equilibrium, $\mE\left[T_Q^{(N)}\right]$, does depend upon the distribution of $Q$.
The epidemic initiated from a single infective either goes extinct
very quickly or takes-off and reaches an endemic equilibrium of a
proportion $(\lambda -1)/\lambda$ of the population infected, see
\cite{Kryscio}. The epidemic then spends a long time fluctuating
about the endemic equilibrium before eventually going extinct. This
can be seen from $\bmpi^{(N)}$, with most of the probability mass
centred about $(\lambda -1) N/\lambda$ infectives. There has been
considerable interest in investigating the distribution of the time
to extinction from the endemic equilibrium, see for example
\cite{Kryscio}, \cite{AndDje}, \cite{Nas99} and~\cite{BN10}. This is
a difficult problem on which to make analytical progress. In
\cite{AndDje}, it was shown that, in the limit as $N \rightarrow
\infty$, for $Q \sim {\rm Exp} (1)$ the time to extinction divided
by $\mE\left[T_Q^{(N)}\right]$ converges in distribution to an exponential random
variable with mean 1. Moreover, $\mE\left[T_Q^{(N)}\right]\sim \sqrt{2 \pi/N}
\lambda \exp (N \{ \log \lambda + 1/\lambda -1\}) /(\lambda-1)^2 =
\mu^{(N)}/ (1-1/\lambda)$, where $\mu^{(N)} = \mE\left[T^{(N)}\right]$. It is conjectured
that an exponential distribution for the time to extinction holds
more generally than for $Q \sim {\rm Exp} (1)$, but even computing
$\mE\left[T_Q^{(N)}\right]$ up to leading terms in $N$ has proved difficult. By
studying Gaussian approximations for the endemic equilibrium
qualitative results on the time to extinction have been obtained,
see \cite{Nas99} and \cite{BN10}. Whilst, such approaches have given
a qualitative understanding of extinction of SIS epidemics, the
estimates obtained for the mean time to extinction are incorrect by
orders of magnitude. Moreover, it is noted in \cite{N14} that
simulation results suggest that the distribution of $Q$ does affect
the mean time to extinction from the quasi-endemic equilibrium,
which is not predicted by using the qualitative Gaussian
approximation.

Let $p_Q$ denote the extinction probability of a branching process,
in which individuals have {\it iid} infectious periods according to
$Q$ and whilst alive, give birth at the points of a homogeneous
Poisson point process with rate $\lambda$. We show in Lemma
\ref{lem:equil} that  $\mE\left[T_Q^{(N)}\right]$ depends upon the distribution $Q$
through $p_Q$.
\begin{lem} \label{lem:equil}
For $\lambda >1$ and $var (Q)<\infty$,
\[ \mE\left[T_Q^{(N)}\right] \sim \frac{1}{1-p_Q} \mu^{(N)}. \]
\end{lem}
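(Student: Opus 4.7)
The plan is to decompose the mean duration $\mu^{(N)}$ of the epidemic started with one infective according to whether the epidemic takes off or dies out quickly, and to identify the dominant contribution with $(1-p_Q)\mE[T_Q^{(N)}]$.

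Fix a small $\epsilon\in(0,1-1/\lambda)$ and let $\tau_\epsilon^{(N)}$ denote the first time the number of infectives reaches $\lceil\epsilon N\rceil$ (set $\tau_\epsilon^{(N)}=\infty$ if this never happens). Let $B^{(N)}=\{\tau_\epsilon^{(N)}<\infty\}$ be the take-off event. Then
\begin{equation*}
\mu^{(N)}=\mE\!\left[T^{(N)};\overline{B^{(N)}}\right]+\mE\!\left[T^{(N)};B^{(N)}\right].
\end{equation*}
First I would couple the epidemic process up to time $\tau_\epsilon^{(N)}$ with the homogeneous-birth-rate branching process of Section~\ref{S:BP} (with lifetime $Q$ and birth rate $\lambda$), using that while the number of infectives is below $\lceil\epsilon N\rceil$, the per-contact probability of hitting a susceptible is at least $1-\epsilon$. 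A standard thinning/coupling bound, combined with the $\mathrm{var}(Q)<\infty$ assumption (needed for the branching-process CLT and finiteness of the rate at which susceptibles are depleted), yields $\mP(B^{(N)})\to 1-p_Q$ as $\Nr$. On $\overline{B^{(N)}}$, the branching-process comparison shows the epidemic dies out in time of order at most $\log N$, so
\begin{equation*}
\mE\!\left[T^{(N)};\overline{B^{(N)}}\right]=O(\log N),
\end{equation*}
which is negligible next to $\mu^{(N)}$ because Lemma~\ref{lem:sis} gives $\mu^{(N)}$ growing exponentially in $N$.

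Next I would analyse the take-off term. Conditional on $B^{(N)}$, the density-dependent drift carries the number of infectives from $\epsilon N$ up to the quasi-endemic level $N(1-1/\lambda)$ in time $O(1)$, by a functional law of large numbers for the scaled process $Y_t^{(N)}/N$ (again using $\mathrm{var}(Q)<\infty$ to control the residual-lifetime contributions to the drift). Applying a regenerative/renewal argument at $\tau_\epsilon^{(N)}$ and again at a fixed hitting time of the endemic region, one obtains
\begin{equation*}
\mE\!\left[T^{(N)}\mid B^{(N)}\right]=\mE\!\left[T_Q^{(N)}\right](1+o(1)),
\end{equation*}
since the pre-equilibrium excursion has length $O(\log N)$ and the post-equilibrium extinction time is exponentially large. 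Combining the two pieces,
\begin{equation*}
\mu^{(N)}=(1-p_Q)\,\mE\!\left[T_Q^{(N)}\right](1+o(1)),
\end{equation*}
which rearranges to the assertion.

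The principal obstacle is that, for non-exponential $Q$, the number of infectives is not Markov, so neither the strong Markov property nor a direct regeneration at $\tau_\epsilon^{(N)}$ is available: the distribution of residual infectious periods of the currently infective individuals depends on the path history. I would handle this by showing that, conditional on take-off, by the time the process first enters the endemic region these residual lifetimes are close in total variation to the stationary residual-life distribution $\mP(Q>x)/\mE[Q]$, uniformly enough that the starting configuration contributes only a vanishing relative error to the huge quantity $\mE[T_Q^{(N)}]$. The moment condition $\mathrm{var}(Q)<\infty$ is precisely what makes these residual-lifetime estimates (and the branching-process survival-probability approximation) go through.
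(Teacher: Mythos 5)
Your decomposition of $\mu^{(N)}$ by the take-off event is essentially the paper's own heuristic (presented in Section~\ref{S:SIS2} before the formal proof is deferred to Appendix B), and the identification $\mP(B^{(N)})\to 1-p_Q$ via a branching-process coupling is sound. The first gap is your claim that $\mE\left[T^{(N)};\overline{B^{(N)}}\right]=O(\log N)$ follows from a branching-process comparison: the dominating branching process $\mathcal{B}_\lambda$ may survive even when the epidemic never reaches $\lceil \epsilon N\rceil$, so domination does not transfer the (conditioned-on-extinction) branching-process lifetime bound to the epidemic on $\overline{B^{(N)}}$. The workable route, which the paper takes, is to observe that on $\overline{B^{(N)}}$ the entire duration is spent in states below the threshold, bound it by $\sum_{n}\mE\left[A_n^{(N)}\right]$ over those states, and evaluate that sum asymptotically via the Poisson rearrangement of~\eqref{eq:SIS:3}; the paper in fact works with thresholds $y_k^{(N)}=\left\lfloor (\lambda-1)N/\lambda-k\sqrt{N}\right\rfloor$ just below the endemic level, obtains the bound $\Phi(-k\sqrt{\lambda})\,\mu^{(N)}$, and lets $k\to\infty$.

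The serious gap is the step $\mE\left[T^{(N)}\mid B^{(N)}\right]=\mE\left[T_Q^{(N)}\right](1+o(1))$, which is where almost all of the work in Appendix B lies and which your proposal does not supply. The decomposition of $\mu^{(N)}$ naturally yields the expected residual duration from whatever (history-dependent) configuration the process is in when it first climbs to a given level, whereas $T_Q^{(N)}$ starts from exactly $y^{(N)}=\left\lfloor (\lambda-1)N/\lambda\right\rfloor$ infectives. To bridge these the paper proves that the mean time $D_{Q,k}^{(N)}$ for the epidemic started at the endemic level to first descend to $y_k^{(N)}$ is $O(N)$ for $1<\lambda\le 2$ and $O(N\log N)$ for $\lambda>2$, hence $o(\mu^{(N)})$; this requires a coupling with a subcritical branching process with immigration (from \cite{N14}) and a separate Poisson-domination construction for $\lambda>2$. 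Your ``regenerative/renewal argument at a fixed hitting time of the endemic region'' is unavailable because the process is not Markov, and your proposed repair --- total-variation convergence of the residual lifetimes to the stationary residual-life distribution --- is itself a hard claim that the paper deliberately avoids, working instead with monotone branching-process couplings and the observation that a positive fraction of the individuals alive at a hitting time have excess life bounded away from zero. Likewise, the functional law of large numbers you invoke for $Y^{(N)}_t/N$ with general $Q$ is not off the shelf (density-dependent limit theory presumes the Markov property); the paper instead bounds the ascent from $y_k^{(N)}$ to $y_l^{(N)}$ by the passage time of a slightly supercritical branching process, giving $O(\sqrt{N}\log N)$. Without an argument of this kind your proof does not close.
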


We give a formal proof of Lemma~\ref{lem:equil} in Appendix B, where we also show that
the mean extinction time of the SIS epidemic $\sim \frac{1}{1-p_Q} \mu^{(N)}$ whenever it starts with a strictly positive fraction of the population infected. Here we present a heuristic proof of Lemma \ref{lem:equil}.
The requirement that $var(Q)< \infty$ is almost
certainly not necessary but is assumed in the proof below. As noted
above, the supercritical SIS epidemic will either quickly go extinct
or will take-off and reach the endemic equilibrium. Let $1-P_Q^{(N)}$
denote the probability that the total number of infectives in the
epidemic equals ${\left \lfloor{\epsilon N}\right \rfloor}$ at some point in time, for some $0 <
\epsilon < (\lambda -1)/\lambda$. (Throughout the paper, $\left \lfloor{x}\right \rfloor$ denotes the greatest integer $\le x$ and $\left \lceil{x}\right \rceil$ denotes the smallest integer $\ge x$.)  Then it is straightforward to
show, using a branching process approximation (see, for example,
\cite{Whittle} and~\cite{BallDonnelly}), that $P_Q^{(N)} \rightarrow
p_Q$ as $N \rightarrow \infty$. Then the mean duration of an
epidemic, initiated from a single infective satisfies
\begin{eqnarray} \label{eq:pi:5}
\mu^{(N)} = \frac{1}{\pi^{(N)} (1)} &=& P_Q^{(N)} A_Q^{(N)} +
(1-P_Q^{(N)}) \left\{ B_Q^{(N)} + \mE\left[T_Q^{(N)}\right]
\right\},
\end{eqnarray}
where $A_Q^{(N)}$ is the mean duration of an epidemic which never
reaches ${\left \lfloor{\epsilon N}\right \rfloor}$ infected (epidemic dies off quickly) and
$B_Q^{(N)}$ is the mean time take for the epidemic to reach the endemic
equilibrium given it reaches ${\left \lfloor{\epsilon N}\right \rfloor}$ infected. The definition of $B_Q^{(N)}$ is imprecise and
correspondingly we take $B^{(N)}_Q$ to be the mean time to reach
$[(\lambda -1) N/\lambda]$ infectives given that the epidemic takes
off. For the case $Q \sim {\rm Exp} (1)$, it is shown in
\cite{AndDje} that $A_Q^{(N)} = O(1)$ and $B_Q^{(N)}= O (\log N)$. Therefore
assuming that for general $Q$, $A_Q^{(N)}, B_Q^{(N)} = o (\mu^{(N)})$, we have
that
\begin{eqnarray} \label{eq:pi:6}
\mE\left[T_Q^{(N)}\right] & = & \frac{1}{1- P_Q^{(N)}} \left\{ \frac{1}{\pi^{(N)} (1)} - P_Q^{(N)}
A_Q^{(N)} - (1-P_Q^{(N)})  B_Q^{(N)} \right\} \nonumber \\
& \approx & \frac{1}{1-p_Q} \times \frac{1}{\pi^{(N)}(1)} =
\frac{1}{1-p_Q} \mu^{(N)},
\end{eqnarray}
Therefore we can immediately see the role of $Q$ in $\mE\left[T_Q^{(N)}\right]$.
Specifically, the
greater the extinction probability $p_Q$, the longer the epidemic
will on average persist, given that it takes off and becomes established.  Note that, subject to $\mE[Q]=1$, the extinction
probability is least when $Q$ is constant ({\it i.e.}~$\mP(Q=1)=1$),
so the model with a constant infectious period has the shortest mean
time to extinction starting from quasi-endemic equilibrium.

\subsection{Household SIS epidemic} \label{S:house}

The final special case we consider is the household SIS epidemic
model. Consider a fixed community consisting of $m$ households
which, for simplicity of exposition, all have the same size $h$, so
the population size is  $N_h=mh$. Our results extend
straightforwardly to the case where the household sizes are unequal.
We are particularly interested in the case where $m$, and hence
$N_h$, is large. Infectious individuals have {\it iid} infectious
periods according to $Q$, after which they become susceptible again.
While infectious an individual makes two types of contacts: the
individual makes global infectious contacts at rate $\lambda_G$,
each time the contacted person is selected independently and
uniformly at random from the whole community, including individuals
in the same household, and the individual makes local infectious
contacts at rate $\lambda_L$ with any given individual in their
household. Therefore, an individual makes infectious contacts at a
total rate of $\lambda_G + (h-1) \lambda_L$. By examining the
within-household dynamics of the SIS epidemic in the initial stages
of the epidemic and at the quasi-endemic equilibrium, we obtain
interesting, and perhaps unexpected, insensitivity results for the
household SIS epidemic model.

For large $m$, the initial stages of the household SIS epidemic can
be approximated by a branching process; see \cite{Ball99}, where the
approximation is made fully rigorous using a coupling argument. The
branching process approximation is similar to that used for the
household SIR epidemic, \cite{BMST97}, with individuals in the
approximating branching process corresponding to within-household
epidemic outbreaks in the epidemic. For large $m$, in the initial
stages of the household SIS epidemic the probability that a global
infectious contact is with an infectious household (a household
containing at least one infective) is very small. Therefore, we
assume that all global infectious contacts are with totally
susceptible households and we consider the epidemic within a
household, ignoring for the moment global infectious contacts,
initiated by a single infective and without any additional global
infections from outside.

Let $S$ denote the total severity of such a
within-household epidemic, where the severity is the sum of the infectious
periods of all infectives during the course of the epidemic from the
initial infective until the epidemic within the household ceases.
Then, conditional upon $S$, the total number of global infectious contacts emanating from
the household has a Poisson distribution with mean $\lambda_G S$, so the
 basic reproduction number of the approximating
branching process is
$R_\ast = \lambda_G \mE[S]$. The household SIS epidemic is said to be
subcritical, critical or supercritical if $R_\ast <1$, $R_\ast =1$
or $R_\ast >1$, respectively. The above expression for $R_\ast$
holds also for the household SIR epidemic, where it is known that $\mE[S]$,
the mean severity of the within-household epidemic, depends upon the
distribution of $Q$, as does both the size of a major outbreak and the distribution of the ultimate number of susceptibles in a typical household in the event of a major outbreak.  The following lemma shows that all of the corresponding quantities for the households SIS epidemic are insensitive to the distribution of $Q$.

\begin{lem} \label{lem:house}
For any $Q$, satisfying $\mE[Q]=1$,
\begin{eqnarray} \label{eq:house:2} R_\ast &=& \lambda_G \sum_{n=1}^h \frac{(h-1)!}{(h-n)!}
\lambda_L^{n-1},
\end{eqnarray} ({\it cf.}~\cite{Ball99}, equation (8)).

For $R_\ast >1$, in the limit as $m \rightarrow \infty$, there
exists an endemic equilibrium with a proportion $z$ of the
population infected, where $z$ is the unique non-zero solution of
\begin{eqnarray} \label{eq:house:3} s = \sum_{i=0}^h i
\phi_i (s),
\end{eqnarray}
with
\begin{eqnarray} \label{eq:house:4} \phi_0 (s) = \left\{ 1 + \sum_{k=1}^h \prod_{i=1}^k \frac{
(h+1 -i) (\lambda_G s + (i-1) \lambda_L)}{i} \right\}^{-1}
\end{eqnarray} and, for $1 \leq k \leq h$,
\begin{eqnarray} \label{eq:house:5} \phi_k (s) = \frac{(h+1 -k) (\lambda_G s + (k-1) \lambda_L) }{k} \phi_{k-1} (s).
\end{eqnarray}
Further, in the limit as $m \rightarrow \infty$, at the endemic equilibrium, the distribution of the number of infectives in a typical household is given by $(\phi_0(z), \phi_1(z),\dots,\phi_h(z))$.
\end{lem}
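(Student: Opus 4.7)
The plan is to prove Lemma \ref{lem:house} by applying the insensitivity machinery of Section \ref{S:generic} to the within-household SIS dynamics, treating the within-household process in each case as a special instance of the generic birth-death process with state-dependent birth rate.

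For \eqref{eq:house:2}, I will use the branching process approximation for the initial phase of the household epidemic, giving $R_\ast = \lambda_G \mE[S]$, where $S$ is the severity of a within-household epidemic started with one infective and ignoring global contacts. A within-household SIS epidemic without external infection is a birth-death type process in a population of size $h$ with birth rate $\alpha(n) = \lambda_L n (h-n)$ (each of the $n$ infectives contacts each of the $h-n$ susceptibles at rate $\lambda_L$). Hence Theorem \ref{thm1} and equation \eqref{eq:stat:9} give
\[ \mE[S] = \sum_{k=1}^{h} k \prod_{i=1}^{k-1} \frac{\lambda_L \, i (h-i)}{i+1}. \]
The telescoping identities $\prod_{i=1}^{k-1} i/(i+1) = 1/k$ and $\prod_{i=1}^{k-1} (h-i) = (h-1)!/(h-k)!$ immediately collapse this to the stated formula for $R_\ast$. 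Crucially, Theorem \ref{thm1} guarantees that $\mE[S]$, and hence $R_\ast$, is insensitive to $Q$.

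For the endemic equilibrium, I will exploit the propagation-of-chaos structure expected in the limit $m \to \infty$: conditional on a population-wide proportion $z$ of infectives, each individual receives global infectious contacts at rate $\lambda_G z$. In this limit, distinct households become asymptotically independent, and each evolves as a birth-death type process on $\{0,1,\ldots,h\}$ with state-dependent birth rate
\[ \alpha_z(n) = (h-n)\bigl(\lambda_G z + n \lambda_L\bigr), \]
where the $(h-n)\lambda_G z$ term accounts for external infections of the $h-n$ susceptibles and the $n(h-n)\lambda_L$ term for internal infections. Because the state space is finite and $\alpha_z(0) = h\lambda_G z > 0$, a unique stationary distribution exists; applying \eqref{eq:stat:3}--\eqref{eq:stat:4} with $\alpha$ replaced by $\alpha_z$ yields exactly the formulae \eqref{eq:house:4}--\eqref{eq:house:5} for $\phi_0(z),\ldots,\phi_h(z)$, and the insensitivity claim follows directly from Theorem \ref{thm1}. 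The endemic value $z$ is then obtained by imposing the self-consistency condition that the proportion of infectives implied by the within-household stationary distribution equals $z$, giving \eqref{eq:house:3} (up to an overall normalization by $h$). Existence and uniqueness of the nonzero root when $R_\ast > 1$ should follow by showing the fixed-point map $s \mapsto \sum_{i=0}^h i\, \phi_i(s)$ is concave with derivative at $s=0$ equal to $R_\ast$, so that it crosses the diagonal at a unique strictly positive value exactly when $R_\ast > 1$.

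The main obstacle is the rigorous justification of the mean-field limit, namely showing that the actual $m$-household SIS process converges, as $m \to \infty$, to the system of independent households driven by the constant external force $\lambda_G z$. This requires a coupling argument in the spirit of \cite{Ball99}, controlling the probability that two global infectious contacts originating from distinct households target the same household, and ensuring that the empirical distribution of household configurations concentrates on $(\phi_0(z),\ldots,\phi_h(z))$. By contrast, once the limiting per-household dynamics are identified, the insensitivity of every conclusion to the distribution of $Q$ is immediate from Theorem \ref{thm1}, so the entire insensitivity content of the lemma reduces to the clean application described above.
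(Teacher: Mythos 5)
Your proposal is correct and follows essentially the same route as the paper: identify the within-household epidemic (with or without a constant external force of infection $\lambda_G z$) as an instance of the generic birth-death process, apply Theorem~\ref{thm1} and the detailed-balance formulae to get $\mE[S]$ and $\bmphi$, and conclude insensitivity to $Q$. The only difference is that the mean-field limit and the existence of the endemic equilibrium, which you flag as the main obstacle and propose to prove via coupling and a concavity argument, are not proved in the paper but are instead imported from \cite{N06} for the Markov case and then transferred to general $Q$ by the insensitivity of $\bmphi$.
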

\begin{proof}
The within-household epidemic without additional global infections
is simply a homogeneously mixing SIS epidemic with $N=h$ and
$\lambda/N = \lambda_L$, so the insensitivity results for the
homogeneously mixing SIS epidemic are applicable. Of primary
interest, this means that
\begin{eqnarray} \label{eq:house:1} \mE[ S] &=&\sum_{n=1}^h \frac{(h-1)!}{(h-n)!}
\lambda_L^{n-1},
\end{eqnarray}
(recall~\eqref{eq:stat8a} and~\eqref{eq:SIS:f0.5}), whence
\eqref{eq:house:2} follows, regardless of the distribution of $Q$.
It should be noted that the distribution of $S$ does depend upon the
distribution of $Q$, so the probability that the epidemic takes off,
corresponding to the approximating branching process not going
extinct, does depend upon the distribution of $Q$.

We turn our attention to the quasi-endemic equilibrium in the case
$R_\ast > 1$. For the Markov case, it is shown in \cite{N06},
Section 4, that, in the limit as $m \rightarrow \infty$, there
exists a stable endemic equilibrium satisfying~\eqref{eq:house:3} to~\eqref{eq:house:5}. We again consider a
within-household epidemic but now with a constant global force of
infection from outside the household. Given that a proportion $z$ of
the population is infected, each individual in a household receives
global infectious contacts at the points of a homogeneous Poisson
point process with rate $z \lambda_G$. Therefore letting $\alpha (n)
= (h-n) \{ z \lambda_G + n \lambda_L \}$, new infections take place
within the household at rate $\alpha (n)$ if there are currently $n$
infectives and $h-n$ susceptibles. Again individuals within the
household have {\it iid} infectious
periods distributed according to $Q$. The within-household epidemic
satisfies the generic framework of Section \ref{S:generic} and it
follows from~\eqref{eq:house:5} that $\bmphi$ satisfies the detailed
balance equation~\eqref{eq:zachary:1}.  Thus $\bmphi$ is the
stationary distribution of the within-household epidemic, regardless
of the distribution of $Q$, so at the endemic equilibrium of the
household SIS epidemic, both the proportion of the population
infected and the distribution of the number infected in a typical
household are insensitive to the distribution of $Q$.
\end{proof}

\appendix

\section{Proof of Lemma~\ref{lem:sis}}
First note from~\eqref{eq:SIS:2} that
\begin{equation}
\label{ETNbirthday}
\mE\left[ T^{(N)}\right] = \sum_{n=1}^N \prod_{i=1}^{n-1} \left(1 - \frac{i}{N} \right)\frac{\lambda^{n-1}}{n}.
\end{equation}

Suppose that $\lambda<1$.  Then, for any $k \in \mathbb{N}$,
\[
\liminf_{N \to \infty} \mE\left[ T^{(N)}\right] \ge \liminf_{N \to \infty}\sum_{n=1}^k \prod_{i=1}^{n-1} \left(1 - \frac{i}{N} \right)\frac{\lambda^{n-1}}{n} = \sum_{n=1}^k \frac{\lambda^{n-1}}{n},
\]
and letting $k \to \infty$ yields
\begin{equation}
\label{liminfLB}
\liminf_{N \to \infty} \mE\left[ T^{(N)}\right] \ge - \frac{\log(1-\lambda)}{\lambda}.
\end{equation}
Also, for any $k \in \mathbb{N}$,
\[
\limsup_{N \to \infty}\mE\left[ T^{(N)}\right] \le \limsup_{N \to \infty}\sum_{n=1}^k \prod_{i=1}^{n-1} \left(1 - \frac{i}{N} \right)\frac{\lambda^{n-1}}{n}+\sum_{n=k+1}^{\infty} \lambda^{n-1}
=\sum_{n=1}^k \frac{\lambda^{n-1}}{n}+\frac{\lambda^k}{1-\lambda},
\]
and letting $k \to \infty$ yields
\begin{equation}
\label{limsupUB}
\limsup_{N \to \infty} \mE\left[ T^{(N)}\right] \le - \frac{\log(1-\lambda)}{\lambda}.
\end{equation}
Combining~\eqref{liminfLB} and~\eqref{limsupUB} yields~\eqref{eq:ls1}.

Suppose that $\lambda=1$.  Then, setting $\lambda=1$ in~\eqref{ETNbirthday} and noting that
\[
\prod_{i=1}^{n-1} \left(1 - \frac{i}{N} \right) \le \prod_{i=1}^{n-1} \exp\left(-\frac{i}{N}\right)=\exp\left(-\frac{n(n-1)}{2N}\right),
\]
yields that, for any $L>0$,
\[
\mE\left[ T^{(N)}\right] \le \sum_{n=1}^{\left \lceil{L\sqrt{N}}\right \rceil}\frac{1}{n}+
\sum_{\left \lceil{L\sqrt{N}}\right \rceil+1}^N \frac{1}{n}\exp\left(-\frac{n(n-1)}{2N}\right)
\le \sum_{n=1}^{{\left \lceil{L\sqrt{N}}\right \rceil}}\frac{1}{n}+
\exp\left(-L^2/2\right)\sum_{\left \lceil{L\sqrt{N}}\right \rceil+1}^N \frac{1}{n}.
\]
Hence,
\begin{equation}
\label{limsupUB1}
\limsup_{N \to \infty} \frac{\mE\left[ T^{(N)}\right]}{\frac{1}{2}\log N} \le 1+\exp(-L^2/2).
\end{equation}

Setting $\lambda=1$ in~\eqref{ETNbirthday} yields that, for any $K>0$,
\[
\mE\left[T^{(N)}\right] \ge \sum_{n=1}^{\left \lceil{K\sqrt{N}}\right \rceil}\frac{1}{n}
\prod_{i=1}^{n-1} \left(1 - \frac{i}{N} \right)\ge
\left(\sum_{n=1}^{\left \lceil{K\sqrt{N}}\right \rceil}\frac{1}{n}\right)\prod_{i=1}^{\left \lceil{K\sqrt{N}}\right \rceil}\left(1-\frac{i}{N} \right).
\]
Now~\eqref{birthday} implies that
\[
\lim_{N \to \infty}\prod_{i=1}^{\left \lceil{K\sqrt{N}}\right \rceil}\left(1-\frac{i}{N} \right)=\exp(-K^2/2),
\]
whence
\begin{equation}
\label{liminfLB1}
\liminf_{N \to \infty} \frac{\mE\left[ T^{(N)}\right]}{\frac{1}{2}\log N} \ge \exp(-K^2/2).
\end{equation}
Letting $L \to \infty$ in~\eqref{limsupUB1} and $K \downarrow 0$ in~\eqref{liminfLB1} yields~\eqref{eq:ls2}.

To prove~\eqref{eq:ls3} we show that
\begin{equation}
\label{limbounds3}
\lim_{N \to \infty} N \sum_{j=0}^{N-1}\frac{(N/\lambda)^{j}\me^{-N/\lambda}}{(N-j)j!}
=\frac{\lambda}{\lambda-1}.
\end{equation}
Note that this makes fully rigorous the approximation at~\eqref{eq:dur:1} and~\eqref{eq:ls3} then follows using~\eqref{eq:SIS:3} and Stirling's formula, as at~\eqref{eq:SISdur}.

Fix $\epsilon \in (0, \lambda^{-1})$ and let $A_1^\epsilon=\{j \in \mathbb{Z}:0 \le j <N(\lambda^{-1}-\epsilon)\}, A_2^\epsilon=\{j \in \mathbb{Z}:N(\lambda^{-1}-\epsilon) \le j \le N(\lambda^{-1}+\epsilon) \}$ and $A_3^\epsilon=\{j \in \mathbb{Z}:N(\lambda^{-1}+\epsilon) < j \le N-1\}$.
Further, let $X^{(N)}$ denote a Poisson random variable with
mean $N/\lambda$.  Then, using Chebyshev's inequality, $\mP\left(X^{(N)} \in A_1^\epsilon\right) \to 0$ and $\mP\left(X^{(N)} \in A_2^\epsilon\right) \to 1$ as $N \to \infty$.  Also, by large deviation theory, there exists $a>0$, independent of $N$, such that $\mP\left(X^{(N)}>N(\lambda^{-1}+\epsilon)\right) \le \me^{-aN}$.  Now
\begin{equation}
\label{Aeps1}
N\sum_{j \in A_1^\epsilon}\frac{(N/\lambda)^{j}\me^{-N/\lambda}}{(N-j)j!}
\le \frac{1}{\lambda^{-1}-\epsilon}\mP\left(X^{(N)} \in A_1^\epsilon\right)
\to 0 \quad \mbox{as} \quad N \to \infty
\end{equation}
and
\begin{equation}
\label{Aeps2}
N\sum_{j \in A_3^\epsilon}\frac{(N/\lambda)^{j}\me^{-N/\lambda}}{(N-j)j!}
\le N \mP\left(X^{(N)} > N(\lambda^{-1}+\epsilon)\right)
\to 0 \quad \mbox{as} \quad N \to \infty.
\end{equation}
Also,
\begin{equation*}
\frac{1}{1-\lambda^{-1}+\epsilon}\mP\left(X^{(N)} \in A_2^\epsilon\right)
\le N\sum_{j \in A_2^\epsilon}\frac{(N/\lambda)^{j}\me^{-N/\lambda}}{(N-j)j!}
\le \frac{1}{1-\lambda^{-1}-\epsilon}\mP\left(X^{(N)} \in A_2^\epsilon\right),
\end{equation*}
whence, using~\eqref{Aeps1}, \eqref{Aeps2} and $\lim_{N \to \infty}\mP\left(X^{(N)} \in A_2^\epsilon\right)=1$,
\begin{equation}
\label{limbounds1}
\liminf_{N \to \infty} N \sum_{j=0}^{N-1}\frac{(N/\lambda)^{j}\me^{-N/\lambda}}{(N-j)j!} \ge \frac{1}{1-\lambda^{-1}+\epsilon}
\end{equation}
and
\begin{equation}
\label{limbounds2}
\limsup_{N \to \infty} N \sum_{j=0}^{N-1}\frac{(N/\lambda)^{j}\me^{-N/\lambda}}{(N-j)j!} \le \frac{1}{1-\lambda^{-1}-\epsilon}.
\end{equation}
Letting $\epsilon \downarrow 0$ in~\eqref{limbounds1} and~\eqref{limbounds2} yields~\eqref{limbounds3}, as required.

\section{Proof of Lemma~\ref{lem:equil}}
For $i=0,1,\ldots,N$, let $T^{(N)}(i)=\inf\{t:Y^{(N)}_t=i\}$ be the first
time that the number of infectives $Y^{(N)}_t$ in the SIS epidemic equals
$i$, with the convention that $T^{(N)}(i)=\infty$ if $Y^{(N)}_t$ never
reaches $i$.  Further, let $F_N(i)=\{T^{(N)}(i)<\infty\}$ and
$F_N^\mc(i)=\{T^{(N)}(i)=\infty\}$.  Also, let $y^{(N)}=\left
\lfloor{(\lambda -1) N/\lambda} \right \rfloor$ and, for $k \in
(0,\infty)$, let $y_k^{(N)}=\left \lfloor{(\lambda -1)
N/\lambda-k\sqrt{N}}\right \rfloor$.  Then
\begin{equation*}
\mu^{(N)}=\mE\left[T^{(N)}\right]=\mE\left[T^{(N)} 1_{F_N^\mc(y_k^{(N)})}\right]+\mE\left[T^{(N)} 1_{F_N(y_k^{(N)})}\right].
\end{equation*}
Further,
\begin{equation*}
\mE\left[T^{(N)} 1_{F_N(y_k^{(N)})}\right]=\mE\left[T^{(N)}(y_k^{(N)}) 1_{F_N(y_k^{(N)})}\right]+\mE\left[\left(T^{(N)}-T^{(N)}(y_k^{(N)})\right) 1_{F_N(y_k^{(N)})}\right].
\end{equation*}
Thus,
\begin{equation}
\label{mun} \mu^{(N)}=a^{(N)}_k+b^{(N)}_k,
\end{equation}
where
\begin{equation}
\label{munan}
a^{(N)}_k=\mE\left[T^{(N)} 1_{F_N^\mc(y_k^{(N)})}\right]+\mE\left[T^{(N)}(y_k^{(N)}) 1_{F_N(y_k^{(N)})}\right]
\end{equation}
and
\begin{equation}
\label{munbn}
b^{(N)}_k=\mE\left[\left(T^{(N)}-T^{(N)}(y_k^{(N)})\right) 1_{F_N(y_k^{(N)})}\right].
\end{equation}

Recall that $A^{(N)}_n$ is the total time that the epidemic spends with $n$ individuals infective.  Note that if
$T^{(N)}(y_k^{(N)})=\infty$ then $T^{(N)} \le \sum_{n=1}^{y_k^{(N)}} A^{(N)}_n$, and if $T^{(N)}(y_k^{(N)})<\infty$ then $T^{(N)}(y_k^{(N)})\le \sum_{n=1}^{y_k^{(N)}} A^{(N)}_n$, so
\begin{equation}
\label{anbound}
0 \le a^{(N)}_k \le \sum_{n=1}^{y_k^{(N)}} \mE\left[A^{(N)}_n\right]=\sum_{n=1}^{y_k^{(N)}}\frac{(N-1)!}{n (N-n)!} \left( \frac{\lambda}{N} \right)^{n-1},
\end{equation}
using~\eqref{eq:stat:6a} and~\eqref{eq:SIS:1}.  Denote the right hand sum in~\eqref{anbound} by $c^{(N)}_k$.  Then rearranging as at~\eqref{eq:SIS:3} yields
\begin{equation}
\label{cn}
c^{(N)}_k=(N-1)!(\lambda/N)^{N-1}\me^{N/\lambda}\sum_{j=z_k^{(N)}}^{N-1}
\frac{(N/\lambda)^{j}\me^{-N/\lambda}}{(N-j)j!},
\end{equation}
where $z_k^{(N)}=\left \lceil{N\lambda^{-1}+k\sqrt{N}}\right \rceil$.  Omitting the details, fixing $\epsilon \in (0,\lambda^{-1})$, splitting the sum in~\eqref{cn} into $z_k^{(N)} \le j \le N(\lambda^{-1}+\epsilon)$ and $N(\lambda^{-1}+\epsilon)<j\le N-1$, invoking the central limit theorem and letting $\epsilon \downarrow 0$ yields
\begin{equation}
\label{cnlimit} \lim_{N \to \infty}
\frac{c^{(N)}_k}{\mu^{(N)}}=\Phi(-k\sqrt{\lambda}),
\end{equation}
where $\Phi$ denotes the standard normal cumulative distribution function.  Thus, recalling~\eqref{limbounds3}, for any $k>0$,
\begin{equation}
\label{anlimit} 0 \le \liminf_{N \to \infty} \frac{a^{(N)}_k}{\mu^{(N)}} \le
\limsup_{N \to \infty} \frac{a^{(N)}_k}{\mu^{(N)}} \le
\Phi(-k\sqrt{\lambda}).
\end{equation}
(Note that a similar argument using~\eqref{eq:bp:6} shows that the
mean time a supercritical branching process takes to reach size $n$,
given that it does not go extinct, is $O(\log n)$.)

We next determine $\lim_{N \to \infty} \mP\left(F_N(y^{(N)}_k)\right)$ $(k \in
(0,\infty))$.  For $\lambda>0$, let $\mathcal{B}_{\lambda}$ denote a
branching process, with one ancestor, in which individuals have {\it
iid} lifetimes according to $Q$ and, whilst alive, give birth at the
points of a homogeneous Poisson point process with rate $\lambda$.
Let $p_Q(\lambda)$ denote the probability that
$\mathcal{B}_{\lambda}$ does  go extinct and, for $t \ge 0$, let
$Y_t(\lambda)$ denote the number of individuals alive in
$\mathcal{B}_{\lambda}$ at time $t$.  Observe that by Lemma 4.1
of~\cite{BJM07}, $p_Q(\lambda') \to p_Q(\lambda)$ as $\lambda' \to
\lambda$. Note that $p_Q = p_Q (\lambda)$.

For $i=0,1,\ldots$, let $T_\lambda(i)=\inf\{t:Y_t(\lambda)=i\}$,
where $T_\lambda(i)=\infty$  if $Y_t(\lambda)$ never reaches $i$.
Fix $\delta \in [0,1)$.  Then, whilst $Y^{(N)}_t \le N\delta$, the SIS
epidemic is bounded below by the branching process
$\mathcal{B}_{(1-\delta)\lambda}$ ({\it c.f.}~\cite{Whittle}), so
\begin{equation}
\label{bplbound} 1-p_Q((1-\delta)\lambda) \le
\mP\left(T_{(1-\delta)\lambda}(\left \lfloor{\delta N}\right \rfloor) <
\infty\right) \le \mP\left(T^{(N)}(\left \lfloor{\delta N}\right \rfloor) <
\infty\right).
\end{equation}
Further, whilst $Y^{(N)}_t \le y_k^{(N)}$, the SIS epidemic is bounded below
by the  branching process $\mathcal{B}_{1+\lambda k
N^{-\frac{1}{2}}}$.  Using for example equation (5.63)
of~\cite{HJV05},
\begin{equation*}
p_Q\left(1+\lambda k N^{-\frac{1}{2}}\right) \le 1-\frac{2 \lambda
k}{N^{\frac{1}{2}}\mE [Q^2](1+\lambda k N^{-\frac{1}{2}})^2} \le
\exp\left(-\frac{2 \lambda k}{N^{\frac{1}{2}}\mE [Q^2](1+\lambda k
N^{-\frac{1}{2}})^2}\right).
\end{equation*}
Thus, for any $\delta \in (0, (\lambda-1)/\lambda)$ and any $k > 0$,
\begin{eqnarray}
\mP\left(T^{(N)}(y^{(N)}_k)=\infty\,|\,T^{(N)}(\left \lfloor{\delta N}\right \rfloor) < \infty\right) &\le& \left(1-\frac{2 \lambda k}{N^{\frac{1}{2}}\mE [Q^2](1+\lambda k N^{-\frac{1}{2}})^2}\right)^{\left \lfloor{\delta N}\right \rfloor} \label{pdeltakbound}\\
&\le& \exp\left(-\frac{2 \left \lfloor{\delta N}\right \rfloor \lambda k}{N^{\frac{1}{2}}\mE [Q^2](1+\lambda k N^{-\frac{1}{2}})^2}\right)\nonumber\\
&\to& 0\quad \mbox{as } N \to \infty, \nonumber
\end{eqnarray}
which on combining with~\eqref{bplbound} and letting $\delta \downarrow 0$ yields
\begin{equation*}
\label{liminfpFN} \liminf_{N \to \infty} \mP\left(F_N(y^{(N)}_k)\right) \ge
1-p_Q(\lambda).
\end{equation*}
(Note that the bound \eqref{pdeltakbound} may not hold if $Q$ does not have an exponential distribution, since the excess lives of the $\left \lfloor{\delta N}\right \rfloor$ individuals alive at time $T^{(N)}(\left \lfloor{\delta N}\right \rfloor)$ will not be distributed as $Q$.  However, it is clear that there exist $\epsilon_1, \epsilon_2 >0$ such that $\mP\left(Y^{(N)}(\epsilon_1) \ge \epsilon_2 \left \lfloor{\delta N}\right \rfloor \right) \to 1$ as $N \to \infty$, where $Y^{(N)}(\epsilon_1)$ denotes the number of individuals at time $T^{(N)}(\left \lfloor{\delta N}\right \rfloor)$ whose excess life is at least $\epsilon_1$.  Conditioning on the number of offspring of those $Y^{(N)}(\epsilon_1)$ individuals and arguing as above shows that $\mP\left(T^{(N)}(y^{(N)}_k)=\infty\,|\,T^{(N)}(\left \lfloor{\delta N}\right \rfloor) < \infty\right) \to 0$ as $N \to \infty$.
A similar comment applies elsewhere in the proof.)

The SIS epidemic is bounded above by $\mathcal{B}_\lambda$, so
\begin{equation*}
\label{limsuppFN} \limsup_{N \to \infty} \mP\left(F_N(y^{(N)}_k)\right) \le
\limsup_{N \to \infty} \mP\left(T_\lambda(y^{(N)}_k)<\infty\right)= 1-p_Q(\lambda),
\end{equation*}
whence, for any $k>0$,
\begin{equation}
\label{limpFNk} \lim_{N \to \infty} \mP\left(F_N(y^{(N)}_k)\right)=1-p_Q(\lambda).
\end{equation}

Suppose now that $k>l>0$.  Arguing as above shows that
\begin{equation}
\label{pnlk}
\mP\left(T^{(N)}(y_l^{(N)})<\infty\,|\,T^{(N)}(y_k^{(N)})<\infty\right) \to 1\quad\mbox{as}\quad N \to \infty,
\end{equation}
 whence
\begin{multline}
\label{bklNlim}
\frac{b^{(N)}_k}{\mu^{(N)}}=\frac{1}{\mu^{(N)}}\mE\left[\left(T^{(N)}-T^{(N)}(y_l^{(N)})\right) 1_{F_N(y_l^{(N)})}\right]\\
+\frac{1}{\mu^{(N)}}\mE\left[\left(T^{(N)}(y_l^{(N)})-T^{(N)}(y_k^{(N)})\right)
1_{F_N(y_l^{(N)})}\right] +o(1) \quad \mbox{as } N \to \infty.
\end{multline}
Let $\theta^{(N)}=1+\lambda l N^{-\frac{1}{2}}$.  Whilst $y_k^{(N)} \le
Y^{(N)}_t \le y_l^{(N)}$, the epidemic is bounded below by
$\mathcal{B}_{\theta^{(N)}}$ but now starting with $y_k^{(N)}$ individuals.
Hence, using~\eqref{pnlk},
\begin{multline}
\label{eptylykbound}
\frac{1}{\mu^{(N)}}\mE\left[\left(T^{(N)}(y_l^{(N)})-T^{(N)}(y_k^{(N)})\right)
1_{F_N(y_l^{(N)})}\right]\\ \le
\frac{1}{\mu^{(N)}}\mE\left[\left(T_{{\theta^{(N)}}}(y_l^{(N)})-T_{{\theta^{(N)}}}(y_k^{(N)})\right)
1_{F_N(k,l)}\right]+o(1) \quad \mbox{as } N \to \infty,
\end{multline}
where $F_N(k,l)=\{T_{\theta^{(N)}}(y_l^{(N)})< \infty\}$, assuming that $\mathcal{B}_{\theta^{(N)}}$ reaches $y_k^{(N)}$.

Consider the branching process  $\mathcal{B}_{\lambda}$.  For
$n=1,2,\ldots$, let  $A_\lambda(n)$ be the total time that
$\mathcal{B}_{\lambda}$ spends with $n$ individuals alive.  (Thus,
in the notation of Section~\ref{S:BP}, $A_\lambda(n)=A_n$.)  Suppose
that $\lambda>1$.  Then, since
\begin{equation*}
T_\lambda(n) 1_{\{T_\lambda(n)<\infty\}} \le \sum_{i=1}^n A_\lambda(i)1_{\{T_\lambda(n)<\infty\}}
\le \sum_{i=1}^n A_\lambda(i),
\end{equation*}
it follows from~\eqref{eq:bp:6} that
\begin{equation*}
\mE\left[T_\lambda(n) \,|\, T_\lambda(n)<\infty\right] \le \frac{1}{\lambda
(1-p_Q(\lambda))}\sum_{i=1}^n \frac{1}{i}.
\end{equation*}

Return to the branching process $\mathcal{B}_{\theta^{(N)}}$.  The
expected time for it to reach $y_l^{(N)}$, starting from $y_k^{(N)}$
individuals, given that it does so, is less than the expected time
for $\mathcal{B}_{\theta^{(N)}}$ to reach $y_l^{(N)}$, starting from one
individual, again conditional upon it doing so.  Thus, since $\theta^{(N)}>1$,
\begin{equation}
\label{ebplkbound}
\mE\left[\left(T_{{\theta^{(N)}}}(y_l^{(N)})-T_{{\theta^{(N)}}}(y_k^{(N)})\right)
1_{F_N(k,l)}\right] \le
\left(1-p_Q(\theta^{(N)})^{y_k^{(N)}}\right)\frac{1}{1-p_Q(\theta^{(N)})}\sum_{i=1}^{y_l^{(N)}}
\frac{1}{i}.
\end{equation}
Theorem 5.5 of~\cite{HJV05} yields $1-p_Q(\theta^{(N)})\sim 2\lambda l
N^{-\frac{1}{2}}/\mE[Q^2]$, whence, using~\eqref{ebplkbound},
$\mE\left[\left(T_{{\theta^{(N)}}}(y_l^{(N)})-T_{{\theta^{(N)}}}(y_k^{(N)})\right)
1_{F_N(k,l)}\right]=O(N^{\frac{1}{2}} \log N)$ as $N \to
\infty$.   It then follows from~\eqref{limpFNk} to~\eqref{eptylykbound}
that
\begin{multline}
\label{bklNlim1}
\frac{b^{(N)}_k}{\mu^{(N)}}=\frac{1}{\mu^{(N)}}(1-p_Q(\lambda))\mE\left[T^{(N)}-T^{(N)}(y_l^{(N)})\,|\,T^{(N)}(y_l^{(N)})<\infty\right]
+o(1) \quad \mbox{as } N \to \infty.
\end{multline}

Dividing~\eqref{mun} by $\mu^{(N)}$ and letting $N \to \infty$ yields,
after using~\eqref{anlimit},
\begin{multline*}
1-\Phi(-k\sqrt{\lambda}) \le \liminf_{N \to \infty}
(1-p_Q(\lambda))\frac{\mE\left[T^{(N)}-T^{(N)}(y_l^{(N)})\,|\,T^{(N)}(y_l^{(N)})<\infty\right] }{\mu^{(N)}}
\\\le \limsup_{N \to
\infty} (1-p_Q(\lambda))\frac{\mE\left[T^{(N)}-T^{(N)}(y_l^{(N)})\,|\,T^{(N)}(y_l^{(N)})<\infty\right]
}{\mu^{(N)}} \le 1,
\end{multline*}
whence, letting $k \to \infty$,
\begin{equation*}
\mE\left[T^{(N)}-T^{(N)}(y_l^{(N)})\,|\,T^{(N)}(y_l^{(N)})<\infty\right] \sim \frac{1}{1-p_Q(\lambda)}
\mu^{(N)}.
\end{equation*}

The above shows that, for any $k >0$, the mean time to extinction
from $y_k^{(N)}=\left \lfloor{(\lambda -1) N/\lambda-k\sqrt{N}}\right
\rfloor$, $\mu^{(N)}_k$ say, satisfies $\mu^{(N)}_k \sim \frac{1}{1-p_Q}
\mu^{(N)}$, since $p_Q (\lambda) = p_Q$. Suppose that we start at the
endemic level with $y^{(N)}=\left \lfloor{(\lambda -1) N/\lambda} \right
\rfloor$ infectives. Then the mean time to extinction from the
endemic level satisfies
\begin{equation}
\label{muQDk}
\mu_Q^{(N)} = D_{Q,k}^{(N)} + \mu_k^{(N)},
\end{equation}
where $D_{Q,k}^{(N)}$ denotes the mean time that the epidemic takes
to reach $y_k^{(N)}$ for the first time. We prove that
$D_{Q,k}^{(N)} =O(N)$ for $1 < \lambda \leq 2$ and that
$D_{Q,k}^{(N)} =O(N\log N)$ for $\lambda >2$. 

Turning first to the case when $1 < \lambda \leq 2$, consider a subcritical branching
process with immigration, $\mathcal{B}^{(N)}$, where individuals
immigrate into the population at the points of a homogeneous Poisson
process with rate $\lambda N(1-1/\lambda)^2$. The lifetimes of
individuals in $\mathcal{B}^{(N)}$ are independent and identically
distributed according to $Q$, and whilst alive, individuals give
birth at the points of a homogeneous Poisson process with rate
$2-\lambda$. Let $B^{(N)}_t$ denote the total number of individuals
alive in $\mathcal{B}^{(N)}$ at time $t$. Then for any $t \geq 0$,
\cite{N14}, Corollary 2.1, gives that
\begin{equation} \label{ttm1} \sqrt{N} (B^{(N)}_t/N - (1-1/\lambda))
\convd N(0,1/\lambda) \quad \mbox{as } N \to \infty.
\end{equation}
It is shown in \cite{N14}, Section 3, that when $1 < \lambda \leq 2$, the SIS epidemic can be coupled to $\mathcal{B}^{(N)}$, such that for
all $t \geq 0$, $Y^{(N)}_t \leq B^{(N)}_t$. Therefore $D_{Q,k}^{(N)}
\leq \tilde{D}_{Q,k}^{(N)}$, the mean time that $\mathcal{B}^{(N)}$
takes to reach $y_k^{(N)}$ for the first time starting with $\left
\lfloor{(\lambda -1) N}\right \rfloor$ individuals. Therefore we
focus on computing an upper bound for $\tilde{D}_{Q,k}^{(N)}$.

Let $T_1^{(N)}$ denote the total length of time it takes for the
family trees of all individuals alive at time 0 to go extinct which
is stochastically smaller than the sum of the extinction times of
the family trees from each of the $\left \lfloor{(\lambda -1)
N}\right \rfloor$ individuals. Thus, using~\eqref{meansubcritext},
\begin{equation}
\label{tt0} \mE\left[T^{(N)}_1 \right] \le \frac{\lambda-1}{\lambda}
N\left\{ \frac{-\log(1-(2-\lambda))}{2-\lambda} \right\}=O(N) \quad
\mbox{as } N \to \infty.
\end{equation}
Let $\hat{B}^{(N)}_{1,t}$ denote the total progeny still alive at time $t$ of the family
trees of individuals who immigrate into the population in the
interval $[0,T_1^{(N)}]$. Then $\hat{B}^{(N)}_{1,t}$ is
independent of the family trees of individuals who immigrate into
the population prior to time 0 and $B^{(N)}_{T_1^{(N)}}
=\hat{B}^{(N)}_{1,T_1^{(N)}}$, where regardless of the value of
$T_1^{(N)}$, $\hat{B}^{(N)}_{1,T_1^{(N)}}$ is stochastically smaller than
$B^{(N)}_\ast$, the total number of individuals alive at time 0 of an
independent copy of $\mathcal{B}^{(N)}$ started at time $-\infty$.
Using \eqref{ttm1} it is straightforward to show that
\begin{eqnarray}
\label{tt1} \mP (\hat{B}^{(N)}_{1,T_1^{(N)}} <y_k^{(N)}) &\geq& \mP
(B^{(N)}_\ast <y_k^{(N)}) \nonumber \\
& \rightarrow & \Phi (-k \sqrt{\lambda})  \quad \mbox{as } N \to
\infty.
\end{eqnarray}
If $\hat{B}^{(N)}_{1,T_1^{(N)}} <y_k^{(N)}$, we know that the first
time $B^{(N)}_t$ reaches $y_k^{(N)}$ is less than or equal to
$T_1^{(N)}$. Otherwise, we consider $B^{(N)}_t$ at a sequence of
times $S_k^{(N)} = \sum_{i=1}^k T_i^{(N)}$, where for
$i=2,3,\ldots$, $T_i^{(N)}$ denotes the total length of time (from
$S_{i-1}^{(N)}$) it takes for the family trees of all individuals
alive at time $S_{i-1}^{(N)}$ to go extinct. This is stochastically
smaller than the sum of the extinction times of the family trees
from each of the $\hat{B}^{(N)}_{i-1,S_{i-1}^{(N)}}$ individuals
alive at time $S_{i-1}^{(N)}$, where $\hat{B}^{(N)}_{i-1,t}$ denotes
the total progeny still alive at time $t$ of the family trees of individuals who immigrate
into the population in the interval $[S_{i-2}^{(N)},S_{i-1}^{(N)}]$. $(S_0^{(N)} = T_0^{(N)} =0.)$ Note that the
$\hat{B}^{(N)}_{i,S_i^{(N)}}$ $(i=1,2,\dots,T_{i-1}^{(N)})$ are conditionally independent
given $\{T_i^{(N)}\}$ and, moreover, $T_i^{(N)}$ depends upon
$\{(\hat{B}^{(N)}_{1,S_1^{(N)}}, T_1^{(N)}), \ldots,
(\hat{B}^{(N)}_{i-1,S_{i-1}^{(N)}}, T_{i-1}^{(N)}) \}$ only through
$\hat{B}^{(N)}_{i-1,S_{i-1}^{(N)}}$. It is then straightforward
using similar arguments to \eqref{tt0} and \eqref{tt1} to show that
\begin{eqnarray}
\label{tt2} \mE\left[T^{(N)}_i | \hat{B}^{(N)}_{i-1,S_{i-1}^{(N)}} >
y_k^{(N)}\right] &=& \mE\left[\mE\left[T^{(N)}_i
|\hat{B}^{(N)}_{i-1,S_{i-1}^{(N)}} \right] \right] \nonumber \\
&\le&  \mE \left[\hat{B}^{(N)}_{i-1,S_{i-1}^{(N)}} |
\hat{B}^{(N)}_{i-1,S_{i-1}^{(N)}} > y_k^{(N)} \right]
\frac{-\log(1-(2-\lambda))}{2-\lambda} \nonumber\\ &=& O(N) \quad
\mbox{as } N \to \infty,
\end{eqnarray}
$B^{(N)}_{S_{i-1}^{(N)}} =\hat{B}^{(N)}_{i-1,S_{i-1}^{(N)}}$ and
\begin{eqnarray}
\label{tt3} \mP \left(\hat{B}^{(N)}_{i-1,S_{i-1}^{(N)}} <y_k^{(N)}
\right) &\geq& \mP
(B^{(N)}_\ast <y_k^{(N)}) \nonumber \\
& \rightarrow & \Phi (-k \sqrt{\lambda})  \quad \mbox{as } N \to
\infty.
\end{eqnarray}
Let $L^{(N)} = \min \{ l; \hat{B}^{(N)}_{l,S_l^{(N)}} < y_k^{(N)}
\}$.  Note that by \eqref{tt0} and \eqref{tt2}, there exists $M
<\infty$ such that for all $i=1,2,\ldots$ and for all sufficiently
large $N$, $\mE [ T_i^{(N)} | \hat{B}^{(N)}_{i-1,S_{i-1}^{(N)}} >
y_k^{(N)}] \leq MN$. Therefore
\begin{eqnarray}
\label{tt4} \mE [S_{L^{(N)}}^{(N)}] &=& \mE \left[ \mE \left[ \left.
\sum_{i=1}^{L^{(N)}} T_i^{(N)} \right|L^{(N)} \right] \right] \nonumber \\
&=& \mE \left[ \sum_{i=1}^{L^{(N)}}  \mE \left[ T_i^{(N)} |
\hat{B}^{(N)}_{i-1,S_{i-1}^{(N)}} > y_k^{(N)} \right] \right]
\nonumber \\
& \leq & M N \mE [L^{(N)}] = O (N) \quad \mbox{as } N \to \infty,
\end{eqnarray}
since for any $l \geq 1$ and 
$\limsup_{N \to \infty} \mP(L^{(N)}
> l) \leq (1 -(\Phi (-k \sqrt{\lambda}))^l$. 
Hence $\tilde{D}_{Q,k}^{N} \leq \mE [S_{L^{(N)}}^{(N)}] = O(N)$.


Before considering the case $\lambda>2$, it is fruitful to derive an upper bound for the 
expected time the SIS epidemic takes to reach its endemic level $y^{(N)}$ given that it 
is currently above that level.  Thus, suppose that the SIS epidemic starts with $k>y^{(N)}$ infectives and
define $T^{(N)}(y^{(N)})$ as before, but note that now
$\mP\left(T^{(N)}(y^{(N)})<\infty\right)=1$.  A simple calculation shows that, whilst
$Y_t^{(N)}>y^{(N)}$, $Y_t^{(N)}$ is bounded below by the subcritical branching
process $\mathcal{B}_{\eta^{(N)}}$, where $\eta^{(N)}=1-\lambda/N$, starting
from $k$ individuals.  It follows that $T^{(N)}(y^{(N)})$ is stochastically
smaller than the extinction time of this branching process, which in
turn is stochastically smaller than the sum of the extinction times
of the family trees from each of the $k$ initial individuals in
$\mathcal{B}_{\eta^{(N)}}$.  Thus, using~\eqref{meansubcritext}, for any $k > y^{(N)}$,
\begin{equation}
\label{mttoendabove}
\mE\left[T^{(N)}(y^{(N)})\,|\,Y_0=k\right] \le N\frac{-\log(\frac{\lambda}{N})}{1-\frac{\lambda}{N}}=O(N \log N) \quad \mbox{as } N \to \infty.
\end{equation}

Fix $\lambda>2$ and $k>0$.  For all $N>\left(k/\left(\frac{1}{2}-\frac{1}{\lambda}\right)\right)^2$ and all $n>y_k^{(N)}$, the infection rate satisfies
\begin{eqnarray*}
\frac{\lambda}{N}n(N-n) &\le& \frac{\lambda}{N}\left[\left(\frac{\lambda-1}{\lambda}\right)N-k\sqrt{N}\right]\left[N-\left(\frac{\lambda-1}{\lambda}\right)N+
k\sqrt{N}\right]\\
&=&\left(1-\frac{1}{\lambda}\right)N-2k\sqrt{N}-k^2 \lambda+k\lambda\sqrt{N}\\
&\le&\beta(N,k,\lambda),
\end{eqnarray*}
where $\beta(N,k,\lambda)=\left(1-\frac{1}{\lambda}\right)N+k\lambda\sqrt{N}$.
For such $N$ it follows that the SIS process $\{Y^{(N)}_t:t \ge 0\}$ can be coupled to a birth-death type process $\{\tilde{Y}^{(N)}_t:t \ge 0\}$ having birth rate given by $\alpha(n)=\beta(N,k,\lambda)$ if $y_k^{(N)}<n \le N$ and $\alpha(n)=0$ otherwise, such that if $Y^{(N)}_0=\tilde{Y}^{(N)}=y^{(N)}$ and these $y^{(N)}$ initial individuals have the same excess lifetimes in the two processes, then $Y^{(N)}_t \le \tilde{Y}^{(N)}_t$ for all $0 \le t \le T^{(N)}(y^{(N)}_k)$.

Let $T_1^{(N)}$ denote the total length of time it takes for the $y^{(N)}$ initial individuals in $\{Y^{(N)}_t:t \ge 0\}$ (or $\{\tilde{Y}^{(N)}_t:t \ge 0\}$) to all die.  Then, conditional upon $T_1^{(N)}$,
$\tilde{Y}^{(N)}_{T_1^{(N)}}$ has a Poisson distribution with mean bounded above by
\[
\beta(N,k,\lambda)\int_0^{T_1^{(N)}}{\rm P}(Q>u){\rm d}u \le \beta(N,k,\lambda)\int_0^{\infty}{\rm P}(Q>u){\rm d}u=\beta(N,k,\lambda),
\]
since ${\rm E}[Q]=1$.  Let $X^{(N)}$ denote a Poisson random variable with mean $\beta(N,k,\lambda)$.  Then,
the above coupling implies that
\[
{\rm P}\left(T^{(N)}(y^{(N)}_k)\le T_1^{(N)}\right) \ge {\rm P}\left(\tilde{Y}^{(N)}_{T_1^{(N)}}\le y^{(N)}_k\right) \ge
{\rm P}\left(X^{(N)}\le y^{(N)}_k \right).
\]
Straightforward application of the central limit theorem yields that
\[
{\rm P}\left(X^{(N)}\le y^{(N)}_k \right) \to \Phi\left(-k(\lambda+1)\sqrt{\frac{\lambda}{\lambda-1}}\right)\quad \mbox{as} \quad N \to \infty.
\]
Note also that $T_1^{(N)}$ is less than the sum of the excess lifetimes of the $y^{(N)}$ individuals alive at time $t=0$, so ${\rm E}\left[T_1^{(N)}\right]=O(N)$.

If $Y^{(N)}_{T_1^{(N)}}>y^{(N)}_k$, let $U_1^{(N)}=\min\{u \ge 0: Y^{(N)}_{T_1^{(N)}+u} \le y^{(N)}\}$.  Note that $U_1^{(N)}=0$ if $Y^{(N)}_{T_1^{(N)}}\le  y^{(N)}$ and~\eqref{mttoendabove} implies that ${\rm E}\left[U_1^{(N)}\,|\,Y^{(N)}_{T_1^{(N)}}> y^{(N)}\right]=O(N\log N)$.  Thus, ${\rm E}\left[U_1^{(N)}\right]=O(N \log N)$.  Now let $T_2^{(N)}$ be the total length of time it takes for the $Y^{(N)}_{T_1^{(N)}+U_1^{(N)}}$ individuals alive in $\{Y^{(N)}_t:t \ge 0\}$ at time $T_1^{(N)}+U_1^{(N)}$ to all die.  The argument now proceeds in a similar fashion to the case when $1<\lambda\le 2$ and, omitting the details, it is easily seen that if $\lambda>2$ then $\tilde{D}_{Q,k}^{N}=O(N\log N)$ and that this holds for all $\lambda >1$. It then follows using~\eqref{muQDk} that $\mu^{(N)}_Q \sim \frac{1}{1-p_Q} \mu^{(N)}$, as required.

Finally, for $i=1,2,\dots, N$, let $T_i^{(N)}=\min\{t>0: Y_t^{(N)}=0\}$ be the extinction time of the SIS epidemic given that initially there are $i$ infectives.  Then, together with Lemma~\ref{lem:equil},~\eqref{mttoendabove} implies that for any $\delta>0$,
\[
\mE\left[T_i^{(N)}\right] \sim \frac{1}{1-p_Q} \mu^{(N)} \quad \mbox{for any } i \ge N\delta.
\]

\section*{Acknowledgements}

We would like to thank Amaury Lambert for bringing \cite{Lambert11}
and the proof of \eqref{eq:bp:6} to our attention.  Tom Britton is grateful to the Swedish Research Council for financial support.

\end{document}